\newcommand{\QQ}{\mathbb{Q}}
\newcommand{\ZZ}{\mathbb{Z}}
\newcommand{\N}{\mathbb{N}}
\newcommand{\DD}{\mathcal{D}}
\newcommand{\sbullet}{{\hspace{.1em}\scriptstyle\bullet\hspace{.1em}}}
\newcommand{\dU}{\mathds{U}}
\DeclareMathOperator{\ord}{ord}
\DeclareMathOperator{\Aut}{Aut}
\DeclareMathOperator{\supp}{\rm supp}
\DeclareMathOperator{\U}{\mathcal{U}}
\newcommand{\Ufil}{\U_{\text{\rm fil}}}
\newcommand{\bchi}{{\boldsymbol\chi}}
\newcommand{\bSigma}{\boldsymbol{\Sigma}}
\newcommand{\bepsilon}{{\boldsymbol\varepsilon}}
\newcommand{\bupchi}{{\boldsymbol\upchi}}
\newcommand{\bupupsilon}{{\boldsymbol\upupsilon}}
\newcommand{\bupvartheta}{{\boldsymbol\upvartheta}}
\DeclareMathOperator{\Sim}{Sym}
\DeclareMathOperator{\Der}{Der}
\DeclareMathOperator{\Hom}{Hom}
\DeclareMathOperator{\diff}{\mathcal{D}}
\DeclareMathOperator{\End}{End}
\DeclareMathOperator{\gr}{\rm gr}
\newcommand{\pcirc}{{\scriptstyle \,\circ\,}}
\newcommand\Id{{\rm Id}}
\newcommand{\bfs}{{\bf s}}
\newcommand{\bft}{{\bf t}}
\DeclareMathOperator{\HS}{HS}
\DeclareMathOperator{\Ider}{IDer}
\DeclareMathOperator{\Sub}{\mathcal{S}}
\DeclareMathOperator{\coideal}{\mathcal{C\!I}}
\newcommand{\coide}[1]{\coideal\left(#1\right)}
\newcommand{\fd}{\mathfrak{d}}
\DeclareMathOperator{\HH}{\Lambda}
\newcommand{\bkappa}{{\boldsymbol\kappa}}
\newcommand{\blambda}{{\boldsymbol\lambda}}
\DeclareMathOperator{\Ima}{Im}
\newtheorem{thm}{Theorem}[subsection]
\newtheorem{cor}[thm]{Corollary}
\newtheorem{prop}[thm]{Proposition}
\newtheorem{lemma}[thm]{Lemma}
\newtheorem{defi}[thm]{Definition}
\newtheorem{exam}[thm]{Example}
\newtheorem{nota}[thm]{Remark}
\newcommand{\numero}{\refstepcounter{thm}\noindent {\bf  \thethm\ }}
\newtheorem{notacion}[thm]{Notation}
\title{Hasse--Schmidt modules versus\\ integrable connections}
\author{Luis Narv\'{a}ez Macarro\thanks{Partially supported by MTM2016-75027-P, P12-FQM-2696
 and FEDER. }}
\date{}
\begin{document}

\maketitle

\begin{abstract} 

We prove that, in characteristic $0$, any Hasse-Schmidt module structure can be recovered from its underlying integrable connection, and consequently Hasse--Schmidt modules and modules endowed with an integrable connection coincide.
\bigskip

\noindent Keywords: Hasse--Schmidt derivation, integrable derivation, differential operator, substitution map, HS-structure, integrable connection.

\noindent {\sc MSC: 14F10, 13N10, 13N15.}
\end{abstract}

\section*{Introduction}

Let $k$ be a commutative ring and $A$ a commutative $k$-algebra. Let us recall what a {\em Hasse--Schmidt module} (HS-module for short) over $A/k$ is \cite[\S 3.1]{nar_envelop}.
A {\em $(p,\Delta)$-variate Hasse--Schmidt derivation} of $A$ over $k$ is a family  $D= \left(D_\alpha\right)_{\alpha\in\Delta}$ of $k$-linear endomorphisms of $A$ such that $D_0$ is the identity map and 
$$ D_\alpha (x y) = \sum_{\scriptscriptstyle \beta + \gamma = \alpha} D_\beta(x) D_\gamma(y),\quad \forall \alpha\in \Delta, \forall x,y\in A,
$$
where $\Delta \subset \N^p$ is a non-empty {\em co-ideal}, i.e. a subset of $\N^p$ such that everytime $\alpha\in \Delta$ and $\alpha'\leq \alpha$ we have $\alpha'\in \Delta$. The component $D_\alpha$ of a Hasse--Schmidt derivation $D$ is a $k$-linear differential operator of $A$ of order $\leq |\alpha|$ vanishing on the image of $k$, in particular $D_\alpha$ is a $k$-linear derivation of $A$ whenever $|\alpha|=1$.
\smallskip

We may think on Hasse--Schmidt derivations as series $D=\sum_{\scriptscriptstyle \alpha\in\Delta} D_\alpha \bfs^\alpha$ in the quotient ring $R[[\bfs]]_\Delta$ of the power series ring $R[[\bfs]] = R[[s_1,\dots,s_p]]$, $R=\End_k(A)$, by the two-sided monomial ideal generated by all $\bfs^\alpha$ with $\alpha \in \N^p \setminus \Delta$.
The set $\HS^p_k(A;\Delta)$ of $(p,\Delta)$-variate Hasse--Schmidt derivations form a subgroup of the group of units $\left(R[[\bfs]]_\Delta\right)^\times$, and they also carry an {\em action of substitution maps} \cite[\S 5]{nar_subst_2018}: 
 given a substitution map $\varphi: A[[s_1,\dots,s_p]]_\Delta \to A[[t_1,\dots,t_q]]_\nabla $ and a $(p,\Delta)$-variate Hasse--Schmidt derivation $D=\sum D_\alpha \bfs^\alpha$, a new ($q,\nabla)$-variate Hasse--Schmidt derivation is given by:
$$\varphi\sbullet D := \sum_{\scriptscriptstyle \alpha\in\Delta} \varphi(\bfs^\alpha) D_\alpha.
$$
A {\em left HS-module} over $A/k$ is an $A$-module $E$ on which Hasse--Schmidt derivations act in a compatible way with the group structure and the action of substitution maps, and satisfying a Leibniz rule. More precisely, for each $(p,\Delta)$-variate Hasse--Schmidt derivation $D=\sum D_\alpha \bfs^\alpha$ of $A$, $E$ is endowed with a $k[[\bfs]]_\Delta$-linear automorphism $\Psi^p_\Delta(D): E[[\bfs]]_\Delta \to E[[\bfs]]_\Delta$ congruent with the identity modulo $\langle \bfs\rangle$, in such a way that:
\begin{itemize}
\item[-)]
 The $\Psi^p_\Delta(-)$ are group homomorphism.
\item[-)] For each substitution map $\varphi: A[[\bfs]]_\Delta \to A[[\bft]]_\nabla $ we have $\Psi^q_\nabla(\varphi \sbullet D) = \varphi \sbullet \Psi^p_\Delta(D)$.
\item[-)] (Leibniz rule) For each $a\in A$ we have $\Psi^p_\Delta(D) a = D(a) \Psi^p_\Delta(D)$.
\end{itemize}
{\em Right HS-modules} over $A/k$ are defined in a similar way.
\smallskip

Actually, taking into account that $k[[\bfs]]_\Delta$-linear automorphisms $ E[[\bfs]]_\Delta \stackrel{\sim}{\to} E[[\bfs]]_\Delta$ congruent with the identity modulo $\langle \bfs\rangle$ can be identified with formal power series with coefficients in $\End_k(E)[\bfs]]_\Delta$ whose $0$-term is the identity, left (resp. right) HS-modules over $A/k$ appear as a particular case of the more general notion of {\em HS-structure} over $A/k$ on a $k$-algebra $S$ over $A$, for $S=\End_k(E)$ (resp. $S=\End_k(E)^{\text{\rm opp}}$).
\smallskip

Group $\HS^1_k(A;\{0,1\})$ can be identified with the additive group $\Der_k(A)$ of $k$-linear derivations of $A$, and not only the $A$-module structure on $\Der_k(A)$ is encoded into the action of substitution maps, but also the Lie bracket on $\Der_k(A)$ can be expressed in terms of the group structure of Hasse--Schmidt derivations and of the action of substitution maps.   
Consequently, 
any left (resp. right) HS-module $(E,\left\{\Uppsi^p_\Delta\right\})$ over $A/k$ carries a natural left (resp. right) integrable connection $\nabla: \Der_k(A) \to \End_k(E)$ (resp. $\nabla: \Der_k(A) \to \End_k(E)^{\text{\rm opp}}$) given by (see Corollary \ref{cor:HS-mod->IC}):
\begin{equation} \label{eq:intro-1}
  \Uppsi^1_{\{0,1\}}(\Id+\delta s) = 1 + \nabla(\delta) s,\quad \forall \delta\in \Der_k(A).
\end{equation}
This paper is devoted to prove that, whenever $\QQ \subset k$, any (left) (resp. right) integrable connection over $A/k$ on an $A$-module $E$ underlies a unique left (resp. right) HS-module structure on $E$, and so, in characteristic $0$, HS-modules and integrable connections coincide (see Corollary  \ref{cor:main}).
\smallskip

Our approach is based on the study of the map $\varepsilon$ in \cite{HS-vs-der} (see also \cite{Mirza_2010}),  associating to each Hasse--Schmidt derivation $D = \sum D_\alpha \bfs^\alpha \in \HS^p_k(A;\Delta)$ a formal power series of classical derivations
$$
\varepsilon(D) = \sum_{\substack{\scriptscriptstyle \alpha \in \Delta\\ \scriptscriptstyle \alpha \neq 0}} \varepsilon_\alpha(D) \bfs^\alpha,\quad \varepsilon_\alpha(D)\in \Der_k(A),
$$
defined as a ``logarithmic derivative''
$$
\varepsilon(D) := D^* \left(\sum_{i=1}^p s_i \frac{\partial D}{\partial s_i}\right),
$$
where $D^*$ denotes the inverse of $D$. For instance, for $D\in \HS^1_k(A;\N)$ we have:
\begin{eqnarray*}
& \varepsilon_1(D)= D_1,\ \varepsilon_2(D) = 2 D_2 - D_1^2,\ \varepsilon_3(D) = 3 D_3 - 2 D_1 D_2 - D_2 D_1 + D_1^3,
&
\\
& \varepsilon_4(D)= 4 D_4 - 3 D_1 D_3 - 2 D_2^2 - D_3 D_1 + 2 D_1^2 D_2 + D_1 D_2 D_1 + D_2 D_1^2 - D_1^4, 
\dots
\end{eqnarray*}
We prove the following results:
\begin{itemize}
\item[(I)] For each left (resp. right) HS-module $\left(E,\left\{\Uppsi^p_\Delta\right\}\right)$ over $A/k$, its underlying integrable connection $\nabla$ in (\ref{eq:intro-1}) satisfies the following compatibility with the $\varepsilon$ maps:
\begin{equation} \label{eq:intro-2}
\varepsilon(\Uppsi^p_\Delta(D)):= \Uppsi^p_\Delta(D)^* \left( \sum_{i=1}^p s_i  \frac{\partial \Uppsi^p_\Delta(D)}{\partial s_i}\right) = \sum_{\substack{\scriptscriptstyle \alpha \in \Delta\\ \scriptscriptstyle \alpha \neq 0}} \nabla (\varepsilon_\alpha(D) ) \bfs^\alpha
\end{equation}
for each non-empty co-ideal $\Delta \subset \N^p$ and each $D\in \HS^p_k(A;\Delta)$ (see Corollary \ref{cor:compatib-nabla-epsilon}).
\item[(II)] If $\QQ \subset k$ and $E$ is an $A$-module endowed with a left (resp. right) integrable connection $\nabla: \Der_k(A) \to \End_k(E)$ (resp. $\nabla: \Der_k(A) \to \End_k(E)^{\text{\rm opp}}$), there is a unique left (resp. right) HS-module structure $\left\{\Uppsi^p_\Delta\right\}$ on $E$ such that the differential equation (\ref{eq:intro-2}) holds for each non-empty co-ideal $\Delta \subset \N^p$ and each Hasse--Schmidt derivation $D\in \HS^p_k(A;\Delta)$ (see Theorem \ref{thm:from-nabla-to-Uppsi}).
\item[(III)] Let us denote $\dU_{A/k}^{\text{\rm\tiny HS}}$ the enveloping algebra of Hasse--Schmidt derivations introduced in \cite{nar_envelop} and $\dU_{A/k}^{\text{\rm\tiny LR}}$ the enveloping algebra of the Lie-Rinehart algebra $\Der_k(A)$ (see \cite[\S 2]{rine-63}). There is a canonical map of filtered $k$-algebras
$ \bkappa: \dU_{A/k}^{\text{\rm\tiny LR}} \to \dU_{A/k}^{\text{\rm\tiny HS}}
$
which is an isomorphism provided that $\QQ \subset k$ (see Theorem \ref{thm:bkappa-iso}).
\end{itemize}
\medskip

Let us now comment on the content of this paper.
\medskip

Section 1 contains the notions and notations used in the paper. We recall the construction and the main properties of the $\varepsilon$ maps in \cite[\S 1.2]{HS-vs-der}, the action of substitution maps on Hasse--Schmidt derivations \cite[\S 5]{nar_subst_2018} and the behavior of $\varepsilon$ under substitution maps (Theorem 3.2.5 of \cite{HS-vs-der}).
\medskip

Section 2 contains the main results of the paper. First, we recall the notion of HS-structure on a $k$-algebra over $A$, the notions of left and right HS-modules over $A/k$, and the existence and main properties of the enveloping algebra of Hasse--Schmidt derivations \cite[\S 3.3]{nar_envelop}. Second, we prove (I), (II) and (III) above.

\section{Notations and preliminaries}

\subsection{Notations}

Throughout the paper we will use the following notations:
\medskip

\noindent $k$ is a commutative ring and $A$ a commutative
$k$-algebra.
\smallskip

\noindent $R$, $S$ are not-necessarily commutative rings, often $k$-algebras (over $A$,  Definition \ref{def:k-algebra-over-A}).

\noindent $\diff_{A/k}$: the ring of $k$-linear differential operators of $A$, \cite{ega_iv_4}.
\smallskip

\noindent $\bfs = \{s_1,\dots,s_p\}$, $\bft =\{t_1,\dots,t_q\}$, \dots\ are sets of variables.
\smallskip

\noindent $\coide{\N^p}$: the set of all non-empty co-ideals of $\N^p$, Definition 
\ref{def:ideal-co-ideal}.
\smallskip

\noindent $M[[\bfs]]_\Delta, M[[\bfs]]_{\Delta,+}$:  \ref{nume:M[[bfs]]Delta}.
\smallskip

\noindent $\U^p(R;\Delta), \Ufil^p(R;\Delta)$: Notation \ref{notacion:Ump}.
\smallskip

\noindent $\Sub_A(p,q;\Delta,\nabla)$: the set of substitution maps $A[[\bfs]]_\Delta \to A[[\bft]]_\nabla$, Definition \ref{def:substitution-maps}.
\smallskip

\noindent $\varphi \sbullet r, r\sbullet \varphi$: \ref{nume:def-sbullet}.
\smallskip

\noindent $\HS^p_k(A;\Delta)$: the group of $(p,\Delta)$-variate Hasse--Schmidt derivations, Definition \ref{def:HS}.
\smallskip

\noindent $\Ider^f_k(A)$: the module of f-integrable derivations, Definition \ref{def:HS-integ}.
\smallskip

\noindent $\Gamma_A M$: the universal power divided algebra of the $A$-module $M$, endowed with the power divided maps $\gamma_m: M \to \Gamma_A M$, cf. \cite[Appendix A]{bert_ogus}. 

\subsection{Some constructions on power series rings and modules}

Throughout this section, $k$ will be a commutative ring, $A$ a commutative $k$-algebra and $R$ a ring, not-necessarily commutative.
\medskip

Let $p\geq 1$ be an integer and let us call $\bfs = \{s_1,\dots,s_p\}$ a set of $p$ variables. The support of each $\alpha\in \N^p$ is defined as  $\supp \alpha := \{i\ |\ \alpha_i\neq 0\}$. 
The monoid $\N^p$ is endowed with a natural partial ordering. Namely, for $\alpha,\beta\in \N^p$, we define
$$ \alpha \leq \beta\quad \stackrel{\text{def.}}{\Longleftrightarrow}\quad \exists \gamma \in \N^p\ \text{such that}\ \beta = \alpha + \gamma\quad \Longleftrightarrow\quad \alpha_i \leq \beta_i\quad \forall i=1\dots,p.$$
We denote $|\alpha| := \alpha_1+\cdots+\alpha_p$. 
\medskip

If $M$ is an abelian group and $M[[\bfs]]$ is the abelian group of power series with coefficients in $M$,
the {\em support}  of a series $m=\sum_\alpha m_\alpha \bfs^\alpha \in M[[\bfs]]$ is $\supp(m) := \{  \alpha \in \N^p\ |\  m_\alpha \neq 0\} \subset \N^p$. We have $m=0\Leftrightarrow \supp(m) = \emptyset$. 
The abelian group $M[[\bfs]]$ is clearly a $\ZZ[[\bfs]]$-module, which will be always endowed with the $\langle \bfs \rangle$-adic topology.

\begin{defi} \label{def:ideal-co-ideal}
We say that a subset $\Delta \subset \N^p$ is  a
{\em co-ideal} of $\N^p$  
if everytime $\alpha\in\Delta$ and $\alpha'\leq \alpha$, then $\alpha'\in\Delta$.
The set of all non-empty co-ideals of $\N^p$ will be denoted by $\coide{\N^p}$. 
\end{defi}
\bigskip

\numero \label{nume:M[[bfs]]Delta}
Let $M$ be an abelian group. 
For each co-ideal $\Delta \subset \N^p$, we denote by $\Delta_M$ the closed sub-$\ZZ[[\bfs]$-bimodule of $M[[\bfs]]$ whose elements are the formal power series $\sum_{\alpha\in\N^p} m_\alpha \bfs^\alpha$ such that $m_\alpha=0$ whenever $\alpha\in \Delta$, and $M[[\bfs]]_\Delta := M[[\bfs]]/\Delta_M$. The elements in $M[[\bfs]]_\Delta$ are power series of the form 
$\sum_{\scriptscriptstyle \alpha\in\Delta} m_\alpha \bfs^\alpha$, $m_\alpha \in M$. If $f:M\to M'$ is a homomorphism of abelian groups, we will denote by $\overline{f}: M[[\bfs]]_\Delta \to M'[[\bfs]]_\Delta$ the $\ZZ[[\bfs]]_\Delta$-linear map defined as $\overline{f}\left(\sum_{\scriptscriptstyle \alpha\in\Delta} m_\alpha \bfs^\alpha\right) = \sum_{\scriptscriptstyle \alpha\in\Delta} f(m_\alpha) \bfs^\alpha$.
\medskip

If $R$ is a ring, then $\Delta_R$ is a closed two-sided ideal of $R[[\bfs]]$ and so $R[[\bfs]]_\Delta$ is a topological ring, which we always consider endowed with the $\langle \bfs \rangle$-adic topology ($=$ to the quotient topology). Similarly, if $M$ is an $(A;A)$-bimodule (central over $k$), then $M[[\bfs]]_\Delta$ is an $(A[[\bfs]_\Delta;A[[\bfs]]_\Delta)$-bimodule (central over $k[[\bfs]]_\Delta$).
\medskip

For $\Delta' \subset \Delta$ non-empty co-ideals of $\N^p$, we have natural $\ZZ[[\bfs]]$-linear projections 
$\tau_{\Delta \Delta'}:M[[\bfs]]_{\Delta} \longrightarrow M[[\bfs]]_{\Delta'}$, that we call {\em truncations}:
$$\tau_{\Delta \Delta'} : \sum_{\scriptscriptstyle \alpha\in\Delta} m_\alpha \bfs^\alpha \in M[[\bfs]]_{\Delta'} \longmapsto \sum_{\scriptscriptstyle \alpha\in\Delta'} m_\alpha \bfs^\alpha \in M[[\bfs]]_{\Delta}.
$$
If $M$ is a ring (resp. an $(A;A)$-bimodule), then the truncations $\tau_{\Delta \Delta'}$ are ring homomorphisms (resp.  $(A[[\bfs]]_{\Delta};A[[\bfs]]_{\Delta})$-linear maps). For $\Delta' = \{0\}$ we have $ M[[\bfs]]_{\Delta'} = M$ and the kernel of $\tau_{\Delta \{0\}}$ will be denoted by $M[[\bfs]]_{\Delta,+}$. 
We have a bicontinuous isomorphism
$\displaystyle M[[\bfs]]_\Delta = \lim_{\longleftarrow} M[[\bfs]]_{\Delta'}$, 
where $\Delta'$ goes through the set of finite co-ideals contained in $\Delta$.

\begin{defi} \label{def:k-algebra-over-A}
A $k$-algebra over $A$  is a (not-necessarily commutative) $k$-algebra $R$ endowed with a map of $k$-algebras $\iota:A \to R$. A filtered $k$-algebra over $A$  is a $k$-algebra $(R,\iota)$ over $A$, endowed with a ring filtration $(R_k)_{k\geq 0}$ such that $\iota(A) \subset R_0$. A map between two (filtered) $k$-algebras $\iota:A \to R$ and $\iota':A \to R'$ over $A$ is a map $g:R\to R'$ of (filtered) $k$-algebras such that $\iota' = g \pcirc \iota$. 
\end{defi}

It is clear that if $R$ is a $k$-algebra over $A$, then $R[[\bfs]]_\Delta$ is a $k[[\bfs]]_\Delta$-algebra over $A[[\bfs]]_\Delta$.

\begin{notacion} \label{notacion:Ump} Let $R$ be a ring, $p\geq 1$ and  $\Delta\subset \N^p$ a non-empty co-ideal. We denote by $\U^p(R;\Delta)$ the multiplicative sub-group of the units of $R[[\bfs]]_\Delta$ whose 0-degree coefficient is $1$. The multiplicative inverse of a unit $r\in R[[\bfs]]_\Delta$ will be denoted by $r^*$.
For $\Delta\subset \Delta'$ co-ideals we have $\tau_{\Delta'\Delta}\left(\U^p(R;\Delta')\right) \subset \U^p(R;\Delta)$ and the truncation map
$\tau_{\Delta'\Delta}:\U^p(R;\Delta') \to \U^p(R;\Delta)$ is a group homomorphisms. Clearly, we have:
\begin{equation} \label{eq:U-inv-limit-finite}
   \U^p(R;\Delta)  
   = \lim_{\stackrel{\longleftarrow}{\substack{\scriptscriptstyle \Delta' \subset \Delta\\ \scriptscriptstyle  \sharp \Delta'<\infty}}} \U^p(R;\Delta').
\end{equation}

\noindent
If $R = \cup_{d\geq 0} R_d$ is a filtered ring, we denote: 
$$\Ufil^p(R;\Delta):= \left\{ \left. \sum_{\scriptscriptstyle \alpha\in\Delta} r_\alpha \bfs^\alpha\in\U^p(R;\Delta)\  \right\vert \  r_\alpha \in R_{|\alpha|}\ \forall \alpha \in\Delta\right\}.
$$
It is clear that $\Ufil^p(R;\Delta)$ is a subgroup of $\U^p(R;\Delta)$.
For any ring homomorphism $f:R\to R'$, the induced ring homomorphism $\overline{f}: R[[\bfs]]_\Delta \to R'[[\bfs]]_\Delta$ sends $\U^p(R;\Delta)$ into $\U^p(R';\Delta)$ and so it induces natural group homomorphisms
$\U^p(R;\Delta) \to \U^p(R';\Delta)$. A similar result holds for the filtered case.
\end{notacion}

\numero \label{nume:widetilde-r}
 Let $E,F$ be $A$-modules. For each $r= \sum_\beta r_\beta \bfs^\beta \in \Hom_k(E,F)[[\bfs]]_\Delta$ we denote by $\widetilde{r}: E[[\bfs]]_\Delta \to F[[\bfs]]_\Delta$ the map defined by:
$$ \widetilde{r} \left( \sum_{\scriptscriptstyle \alpha\in \Delta} e_\alpha \bfs^\alpha \right) :=
\sum_{\scriptscriptstyle \alpha\in \Delta } \left( \sum_{\scriptscriptstyle \beta + \gamma=\alpha} r_\beta(e_\gamma) \right) \bfs^\alpha,
$$
which is obviously a $k[[\bfs]]_\Delta$-linear map. 
It is clear that the map:
\begin{equation} \label{eq:tilde-map}
 r\in \Hom_k(E,F)[[\bfs]]_\Delta \longmapsto \widetilde{r} \in \Hom_{k[[\bfs]]_\Delta}(E[[\bfs]]_\Delta,F[[\bfs]]_\Delta)
\end{equation}
is an isomorphism of $(A[[\bfs]]_\Delta;A[[\bfs]]_\Delta)$-bimodules. When $E=F$, it is an isomorphism of 
$k[[\bfs]]_\Delta$-algebras over $A[[\bfs]]_\Delta$. Moreover, the restriction map:
$$f \in  \Hom_{k[[\bfs]]_\Delta}(E[[\bfs]]_\Delta,F[[\bfs]]_\Delta) \longmapsto f|_E \in \Hom_k(E,F[[\bfs]]_\Delta)
$$
is an isomorphism of $(A[[\bfs]]_\Delta;A)$-bimodules, whose inverse is
$$ g\in \Hom_k(E,F[[\bfs]]_\Delta) \mapsto g^e \in \Hom_{k[[\bfs]]_\Delta}(E[[\bfs]]_\Delta,F[[\bfs]]_\Delta),\  g^e \left( \sum_\alpha e_\alpha \bfs^\alpha \right) := \sum_\alpha g(e_\alpha) \bfs^\alpha.   
$$
Let us call $R = \End_k(E)$. As a consequence of the above properties, the composition of the maps:
\begin{equation}  \label{eq:comple-formal-1}
 R[[\bfs]]_\Delta \xrightarrow{r \mapsto \widetilde{r}} \End_{k[[\bfs]]_\Delta}(E[[\bfs]]_\Delta) \xrightarrow{f \mapsto f|_E} \Hom_k(E,E[[\bfs]]_\Delta)
\end{equation}
is an isomorphism of $(A[[\bfs]]_\Delta;A)$-bimodules. 

\begin{notacion} \label{notacion:pcirc}
We denote:
$$
\Hom_k^\pcirc(E,E[[\bfs]]_\Delta) := \left\{ f \in \Hom_k(E,E[[\bfs]]_\Delta), f(e) \equiv e\!\!\!\!\mod \langle \bfs \rangle E[[\bfs]]_\Delta\ \forall e\in E \right\}, 
$$
$$
\Aut_{k[[\bfs]]_\Delta}^\pcirc(E[[\bfs]]_\Delta) 
:= \left\{    
f \in \Aut_{k[[\bfs]]_\Delta}(E[[\bfs]]_\Delta), f(e) \equiv e_0\, \text{\rm mod} \langle \bfs \rangle E[[\bfs]]_\Delta\ \forall e\in E[[\bfs]]_\Delta  \right\}.
$$
Let us notice that a $f \in \Hom_k(E,E[[\bfs]]_\Delta)$, given by $f(e) = \sum_{\alpha\in \Delta} f_\alpha(e) \bfs^\alpha$, belongs to $\Hom_k^\pcirc(E,E[[\bfs]]_\Delta)$ if and only if $f_0=\Id_E$.
\end{notacion}

The isomorphism in (\ref{eq:comple-formal-1}) gives rise to a group isomorphism
\begin{equation} \label{eq:U-iso-pcirc}
 r\in \U^p(\End_k(E);\Delta) \stackrel{\sim}{\longmapsto} \widetilde{r} \in \Aut_{k[[\bfs]]_\Delta}^\pcirc(E[[\bfs]]_\Delta). 
\end{equation}
\bigskip

If $R$ is a (not necessarily commutative) $k$-algebra and $\Delta \subset \N^p$ is a co-ideal, any continuous $k$-linear map $h: k[[\bfs]]_\Delta \to k[[\bfs]]_\Delta$ induces a natural continuous left and right $R$-linear map 
$$ h_R:= \Id_R \widehat{\otimes}_k h:R[[\bfs]]_\Delta = R \widehat{\otimes}_k k[[\bfs]]_\Delta \longrightarrow R[[\bfs]]_\Delta = R \widehat{\otimes}_k k[[\bfs]]_\Delta 
$$
given by 
$$ h_R\left(\sum_\alpha r_\alpha \bfs^\alpha \right) = \sum_\alpha r_\alpha h(\bfs^\alpha).
$$
If $\fd:k[[\bfs]]_\Delta \to k[[\bfs]]_\Delta$ is a $k$-derivation, it is continuous and $\fd_R: R[[\bfs]]_\Delta \to R[[\bfs]]_\Delta$ is a  $(R;R)$-linear derivation, i.e.:
$$ \fd_R(sr)=s\fd_R(r),\ \fd_R(rs)=\fd_R(r)s,\ \fd_R( r r') =  \fd_R ( r ) r' + r \fd_R( r'),\ \forall s\in R,\ \forall  r,r'\in R[[\bfs]]_\Delta.
$$
The following definition provides a particular family of $k$-derivations.

\begin{defi} \label{def:Euler-derivation}
For each $i=1,\dots,p$, 
the {\em $i$th partial Euler $k$-derivation}  is $\chi^i = s_i\frac{\partial}{\partial s_i}:k[[\bfs]] \to k[[\bfs]]$. It induces a $k$-derivation on each $k[[\bfs]]_\Delta$, which will be also denoted by $\chi^i$.
\medskip

\noindent 
The {\em Euler $k$-derivation} $\bchi:k[[\bfs]] \to k[[\bfs]]$
is defined as:
$$ \bchi = \sum_{i=1}^p \chi^i,\quad
\bchi\left( \sum_{\scriptscriptstyle \alpha} c_\alpha \bfs^\alpha \right) =  \sum_{\scriptscriptstyle \alpha} |\alpha| c_\alpha \bfs^\alpha.
$$
It induces a $k$-derivation on each $k[[\bfs]]_\Delta$, which will be also denoted by $\chi$.
\end{defi}

\begin{notacion} \label{notacion:varepsilon-fd}
For any $k$-derivation $\fd:k[[\bfs]]_\Delta \to k[[\bfs]]_\Delta$ and for any $r\in \U^p(R;\Delta)$, we denote:
$$\varepsilon^\fd(r) :=r^* \fd_R(r),\quad \overline{\varepsilon}^\fd(r) :=\fd_R(r) r^*,
$$
and we will write:
$$ \varepsilon^\fd(r)= \sum_{\alpha} \varepsilon^\fd_\alpha(r) \bfs^\alpha,\quad 
 \overline{\varepsilon}^\fd(r)= \sum_{\alpha} \overline{\varepsilon}^\fd_\alpha(r) \bfs^\alpha.
$$
We will simply denote:
\begin{itemize}
\item[-)] $ \varepsilon^i(r) := \varepsilon^\fd(r)$, $\overline{\varepsilon}^i(r) :=  \overline{\varepsilon}^\fd(r)
$ if $\fd= \chi^i$, $i=1,\dots,p$.
\item[-)]  $\varepsilon(r) := \varepsilon^\fd(r)$, $\overline{\varepsilon}(r) :=  \overline{\varepsilon}^\fd(r)$ if $\fd=\bchi$.
\end{itemize}
Clearly, $ \varepsilon = \sum_{i=1}^p \varepsilon^i$ and $\overline{\varepsilon} = \sum_{i=1}^p \overline{\varepsilon}^i$.
\end{notacion}
\bigskip

\numero \label{nume:explicit-varepsilon}  For the ease of the reader, here we collect several results of
\cite{HS-vs-der} (see Lemma 1.2.13, 1.2.14 and Lemma 1.2.16 in {\em loc.~cit.}).
\medskip

Let $\fd, \fd':k[[\bfs]]_\Delta \to k[[\bfs]]_\Delta$ be 
 $k$-derivations, $r,r'\in\U^p_k(R;\Delta)$ and $i,j=1,\dots,p$. Then, the following identities hold:
\begin{enumerate}
\item[(i)] 
$\overline{\varepsilon}^\fd(1) = \varepsilon^\fd(1) =0$, $\varepsilon^\fd(r'\, r) = \varepsilon^\fd(r) + r^*\, \varepsilon^\fd(r')\, r$, 
$\overline{\varepsilon}^\fd(r\, r') = \overline{\varepsilon}^\fd(r) + r\, \overline{\varepsilon}^\fd(r')\, r^*$.
\item[(ii)]
$\varepsilon^\mathfrak{d}(r^*) = -r\, \varepsilon^\fd(r)\, r^* = -\overline{\varepsilon}^\fd(r)$.
\item[(iii)]
$\varepsilon^{[\fd,\fd']}(r) = \left[\varepsilon^\mathfrak{d}(r),\varepsilon^{\fd'}(r)\right] + \fd_R \left( \varepsilon^{\fd'}(r) \right) - \fd'_R \left( \varepsilon^\fd(r) \right) $.
\item[(iv)] $ \displaystyle \varepsilon^i(r)=  \sum_{\scriptscriptstyle \alpha} \left(\sum_{\scriptscriptstyle \beta+\gamma=\alpha} \gamma_i r^*_\beta \, r_\gamma \right) \bfs^\alpha$, 
$ \displaystyle 
\varepsilon(r)= \sum_{\scriptscriptstyle \alpha} \left(\sum_{\scriptscriptstyle \beta+\gamma=\alpha} |\gamma| r^*_\beta \, r_\gamma \right) \bfs^\alpha$,

$ \displaystyle \overline{\varepsilon}^i(r)=  \sum_{\scriptscriptstyle \alpha} \left(\sum_{\scriptscriptstyle \beta+\gamma=\alpha} \beta_i r_\beta \, r^*_\gamma \right) \bfs^\alpha$, 
$ \displaystyle 
\overline{\varepsilon}(r)= \sum_{\scriptscriptstyle \alpha} \left(\sum_{\scriptscriptstyle \beta+\gamma=\alpha} |\beta| r_\beta \, r^*_\gamma \right) \bfs^\alpha$.\\ In particular, by writing $\varepsilon^i(r)= \sum_{\alpha} \varepsilon^i_\alpha(r) \bfs^\alpha$, $\overline{\varepsilon}^i(r)= \sum_{\alpha} \overline{\varepsilon}^i_\alpha(r) \bfs^\alpha$, $\varepsilon(r)= \sum_{\alpha} \varepsilon_\alpha(r) \bfs^\alpha$ and $\overline{\varepsilon}(r)= \sum_{\alpha} \overline{\varepsilon}_\alpha(r) \bfs^\alpha $, we have $\varepsilon^i_\alpha(r) = \overline{\varepsilon}^i_\alpha(r) =0$ whenever $\alpha_i=0$, i.e. whenever $i\notin \supp \alpha$, and $\varepsilon_0(r) = \overline{\varepsilon}_0(r) =0$ and so $\varepsilon^i(r), \overline{\varepsilon}^i(r), \varepsilon(r), \overline{\varepsilon}(r) \in R[[\bfs]]_{\Delta,+}$ (see (\ref{nume:M[[bfs]]Delta})).
\item[(v)]
$ \chi^j_R \left( \varepsilon^i(r) \right) -
\chi^i_R \left( \varepsilon^j(r) \right) = [\varepsilon^i(r),\varepsilon^j(r)]$, and a similar identity holds for the $\overline{\varepsilon}^i$.
\end{enumerate}

\begin{notacion} \label{notacion:Lambda-bepsilon}
Under the above conditions, we will denote by $ \HH^p(R;\Delta)$ the subset of $\left( R[[\bfs]]_{\Delta,+} \right)^p$ whose elements are the families $\{\delta^i\}_{1\leq i\leq p}$ satisfying the following properties:
\begin{enumerate}
\item[(a)] If $\delta^i = \sum_{|\alpha|>0} \delta^i_\alpha \bfs^\alpha$, we have $\delta^i_\alpha=0$ whenever $\alpha_i=0$.
\item[(b)] For all $i,j=1,\dots,p$ we have $\chi^j_R \left( \delta^i \right) -
\chi^i_R \left( \delta^j \right) = [\delta^i,\delta^j]$.
\end{enumerate}
Let us also consider the map
$ \bSigma: \{\delta^i\} \in \HH^p(R;\Delta) \longmapsto \sum_{i=1}^p \delta^i \in R[[\bfs]]_{\Delta,+}$. 
After  \ref{nume:explicit-varepsilon}, (v), we can consider the map:
$$ \bepsilon: r \in \U^p(R;\Delta) \longmapsto \{ \varepsilon^i(r)\}_{1\leq i\leq p} \in \HH^p(R;\Delta)
$$
and we obviously have $\varepsilon = \bSigma \pcirc \bepsilon$.
\end{notacion}

The following statement reproduces Proposition 1.2.18 of \cite{HS-vs-der}.

\begin{prop} \label{prop:varepsilon-bijective-Q}
Assume that $\QQ \subset k$. Then, the three maps in the following commutative diagram:
$$
\begin{tikzcd}
\U^p(R;\Delta)   
 \ar[]{r}{\bepsilon} \ar{dr}{\varepsilon } & \HH^p(R;\Delta) \ar[]{d}{\bSigma} 
 \\
 & R[[\bfs]]_{\Delta,+}
\end{tikzcd}
$$
are bijective.
\end{prop}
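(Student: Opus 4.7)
The plan is to reduce everything to two claims: (a) the map $\varepsilon$ is bijective, and (b) the map $\bSigma$ is injective. Once these are in hand, the factorization $\varepsilon=\bSigma\pcirc\bepsilon$ formally yields the rest. For (a), the key tool is the explicit formula of \ref{nume:explicit-varepsilon}(iv),
\[
\varepsilon_\alpha(r)=\sum_{\beta+\gamma=\alpha}|\gamma|\,r^*_\beta\,r_\gamma.
\]
Isolating the term $(\beta,\gamma)=(0,\alpha)$ gives $\varepsilon_\alpha(r)=|\alpha|\,r_\alpha+P_\alpha$, where $P_\alpha$ depends only on the coefficients $r_\delta$ with $|\delta|<|\alpha|$: indeed, in the remaining terms either $\gamma=0$ (contributing $0$) or $0<|\gamma|,|\beta|<|\alpha|$, and each $r^*_\beta$ is a universal polynomial in $r_\delta$ with $|\delta|\le|\beta|<|\alpha|$. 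Since $\QQ\subset k$, $|\alpha|$ is invertible, so from any prescribed $\xi=\sum_\alpha\xi_\alpha\bfs^\alpha\in R[[\bfs]]_{\Delta,+}$ the relation $r_\alpha=|\alpha|^{-1}(\xi_\alpha-P_\alpha)$ recursively determines a unique $r\in\U^p(R;\Delta)$ with $\varepsilon(r)=\xi$.

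For (b), I argue by induction on $|\alpha|$: suppose $\{\delta^i\},\{\delta'^i\}\in\HH^p(R;\Delta)$ satisfy $\bSigma(\delta)=\bSigma(\delta')$ and agree on all components of degree $<|\alpha|$, and set $e^i:=\delta^i_\alpha-\delta'^i_\alpha$. Condition (a) of Notation \ref{notacion:Lambda-bepsilon} gives $e^i=0$ for $i\notin\supp\alpha$. Since $\delta^i_0=\delta'^i_0=0$, the commutator coefficient $[\delta^i,\delta^j]_\alpha$ depends only on components of degree $<|\alpha|$ and is the same for both families, so condition (b) forces $\alpha_j e^i=\alpha_i e^j$ for all $i,j$. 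Fixing $j\in\supp\alpha$ and summing these equalities over $i\in\supp\alpha$, the hypothesis $\sum_i e^i=\bSigma(\delta)_\alpha-\bSigma(\delta')_\alpha=0$ yields $0=|\alpha|\,e^j$, hence $e^j=0$ since $|\alpha|$ is invertible.

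To assemble the conclusion, $\bepsilon$ is injective because $\bSigma\pcirc\bepsilon=\varepsilon$ is; for surjectivity of $\bepsilon$, given $\delta\in\HH^p(R;\Delta)$ let $r$ be the unique element with $\varepsilon(r)=\bSigma(\delta)$, so that $\bSigma(\bepsilon(r))=\varepsilon(r)=\bSigma(\delta)$, and (b) forces $\bepsilon(r)=\delta$; finally $\bSigma=\varepsilon\pcirc\bepsilon^{-1}$ is bijective. The main obstacle I anticipate is not any single step but the bookkeeping required to reduce (b) to the elementary linear-algebra step on $\{e^i\}$; the hypothesis $\QQ\subset k$ enters essentially twice, once to invert $|\alpha|$ in the recursion for $\varepsilon$, and once to conclude $e^j=0$ from $|\alpha|\,e^j=0$.
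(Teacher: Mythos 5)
Your argument is correct: the recursive inversion of $\varepsilon$ via $\varepsilon_\alpha(r)=|\alpha|\,r_\alpha+P_\alpha$ with $|\alpha|$ invertible, together with the induction showing $\bSigma$ is injective on $\HH^p(R;\Delta)$ (using condition (a) to kill $e^i$ for $i\notin\supp\alpha$ and condition (b) plus $\sum_i e^i=0$ to get $|\alpha|e^j=0$), and the formal assembly through $\varepsilon=\bSigma\pcirc\bepsilon$, is exactly the standard route. The paper itself gives no proof here --- it imports the statement as Proposition 1.2.18 of \cite{HS-vs-der} --- and your write-up is a sound, self-contained proof along the same lines as that reference.
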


Notice that Proposition \ref{prop:varepsilon-bijective-Q} also holds with the $\overline{\varepsilon}^i$ instead of the $\varepsilon^i$.

\subsection{Substitution maps}

In this section we give a summary of sections 2 and 3 of \cite{nar_subst_2018}. Let $k$ be a commutative ring, $A$ a commutative $k$-algebra, $\bfs=\{s_1,\dots,s_p\},\bft=\{t_1,\dots,t_q\}$ two sets of variables and
$\Delta\subset \N^p, \nabla\subset \N^q$ non-empty co-ideals.
\medskip

\begin{defi}  \label{def:substitution-maps} 
An $A$-algebra map $\varphi:A[[\bfs]]_\Delta \xrightarrow{} A[[\bft]]_\nabla$
will be called a {\em substitution map} whenever $\ord (\varphi(s_i)) \geq 1$ for all $i=1,\dots, p$.
A such map is continuous and uniquely determined by the family $c=\{\varphi(s_i), i=1,\dots, p\}$.
The set of substitution maps $A[[\bfs]]_\Delta \xrightarrow{} A[[\bft]]_\nabla$ will be denoted by $\Sub_A(p,q;\Delta,\nabla)$.
\end{defi}

The composition of substitution maps is obviously a substitution map.

\begin{defi} We say that a substitution map $\varphi: A[[\bfs]]_\Delta \xrightarrow{} A[[\bft]]_\nabla $ has {\em constant coefficients} 
if $\varphi(s_i) \in k[[\bft]]_\nabla$ for all $i=1,\dots,p$.
\end{defi}

Substitution maps with constant coefficients are induced by substitution maps $k[[\bfs]]_\Delta \xrightarrow{} k[[\bft]]_\nabla$.
\bigskip

\numero \label{nume:def-sbullet}
Let $R$ be a $k$-algebra over $A$ and $\varphi: A[[\bfs]]_\Delta \xrightarrow{} A[[\bft]]_\nabla $ a substitution map. For $r=\sum_\alpha
r_\alpha \bfs^\alpha \in R[[\bfs]]_\Delta$ we denote:
\begin{eqnarray*}
& \displaystyle  
\varphi \sbullet r := \sum_\alpha \varphi(\bfs^\alpha)
r_\alpha,\quad
 r \sbullet \varphi 
 := \sum_\alpha 
r_\alpha \varphi(\bfs^\alpha).
\end{eqnarray*}
It is clear that $\varphi \sbullet \U^p(R;\Delta) \subset \U^q(R;\nabla)$ and $\U^p(R;\Delta)\sbullet \varphi \subset \U^q(R;\nabla)$,
and if $R$ is a filtered $k$-algebra over $A$, then  $\varphi \sbullet \Ufil^p(R;\Delta) \subset \Ufil^q(R;\nabla)$ and $\Ufil^p(R;\Delta)\sbullet \varphi \subset \Ufil^q(R;\nabla)$.
We also have $\varphi \sbullet 1 = 1 \sbullet \varphi = 1$. 
\medskip

\noindent
If $\varphi$ is a substitution map with \underline{constant coefficients}, then  $\varphi \sbullet r = r \sbullet \varphi$ and $\varphi \sbullet (rr') = (\varphi \sbullet r) (\varphi \sbullet r')$. 
Additional information about the $\sbullet$ operations can be found at \cite[\S 4]{nar_subst_2018}.
\bigskip

Let us consider the power series ring $A[[\bfs,\tau]] = A[[\bfs]] \widehat{\otimes}_A A[[\tau]]$, and for each
 $i=1,\dots,p$ we denote $\sigma^i: A[[\bfs]] \to A[[\bfs,\tau]]$ the substitution map (with constant coefficients) defined by:
$$ \sigma^i(s_j) = \left\{ \begin{array}{lll}
s_i+s_i \tau & \text{if} & j=i\\
s_j & \text{if} & j\neq i.
\end{array} \right.
$$
Let us also denote $\sigma: A[[\bfs]] \to A[[\bfs,\tau]]$ the substitution map (with constant coefficients) defined by:
$ \sigma(s_i) = s_i + s_i \tau$  for all $i=1,\dots,p$, 
and $\iota: A[[\bfs]] \to A[[\bfs,\tau]]$ the substitution map induced by the inclusion $\bfs \hookrightarrow \bfs \sqcup \{\tau\}$.
\medskip

It is clear that for each non-empty co-ideal $\Delta \subset \N^p$, the substitution maps $\sigma^i,\sigma, \iota: A[[\bfs]] \to A[[\bfs,\tau]]$ induce new substitution maps $A[[\bfs]]_\Delta \to A[[\bfs,\tau]]_{\Delta \times \{0,1\}}$, which will be also denoted by the same letters. 
\medskip

The proof of the following lemma is clear. 

\begin{lemma} \label{lemma:xi}
The map $\xi: R[[\bfs]]_{\Delta,+} \to \U^{p+1}(R;\Delta \times \{0,1\})$  defined as:
$$ \xi \left( \sum_{\scriptscriptstyle \alpha\in\Delta, |\alpha|>0} r_\alpha \bfs^\alpha \right) =
1 + \sum_{\scriptscriptstyle \alpha\in\Delta, |\alpha|>0} r_\alpha \bfs^\alpha \tau
$$
is a group homomorphism. Moreover, the map $\xi$ is injective and its image is the set of $r\in \U^{p+1}(R;\Delta \times \{0,1\})$ such that $\supp r \subset \{(0,0)\} \cup ((\Delta \setminus \{0\}) \times \{1\})$.
\end{lemma}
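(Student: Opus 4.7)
The whole argument rests on a single observation: in the quotient $R[[\bfs,\tau]]_{\Delta \times \{0,1\}}$, every monomial involving $\tau^j$ with $j \geq 2$ vanishes, because $(\alpha,j) \notin \Delta \times \{0,1\}$ as soon as $j \geq 2$. The element $\xi(a) = 1+a\tau$ then behaves like a ``dual number'', and this collapse is what drives every part of the lemma.

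For well-definedness, I would note that $\xi(a) = 1+a\tau$ has constant term $1$ and admits the explicit two-sided inverse $1 - a\tau$, since $(1+a\tau)(1-a\tau) = 1 - a^2\tau^2 = 1$ in the quotient; hence $\xi(a) \in \U^{p+1}(R;\Delta\times\{0,1\})$. The homomorphism property, from the additive group $R[[\bfs]]_{\Delta,+}$ to the multiplicative group $\U^{p+1}(R;\Delta\times\{0,1\})$, is then the one-line computation
\[ \xi(a)\,\xi(b) = (1+a\tau)(1+b\tau) = 1 + (a+b)\tau + ab\,\tau^2 = 1 + (a+b)\tau = \xi(a+b), \]
where the cross term vanishes thanks to $\tau^2 = 0$ in the quotient.

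Injectivity and the image description fall out by matching coefficients. If $\xi(a)=1$, then $a\tau = 0$ in the quotient; since every $\alpha \in \Delta$ with $|\alpha|>0$ satisfies $(\alpha,1)\in \Delta\times\{0,1\}$, no cancellation can occur and each coefficient of $a$ must vanish. For the image, the inclusion $\supp \xi(a) \subset \{(0,0)\} \cup ((\Delta\setminus\{0\})\times\{1\})$ is immediate from the defining formula, while conversely any $r \in \U^{p+1}(R;\Delta\times\{0,1\})$ supported in that set is forced to have the form $r = 1 + \sum_{\alpha\in\Delta\setminus\{0\}} r_{(\alpha,1)}\bfs^\alpha \tau$ (the constant coefficient must be $1$ by membership in $\U^{p+1}$), and this is $\xi(a)$ for the obvious $a\in R[[\bfs]]_{\Delta,+}$. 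There is no genuine obstacle: the entire lemma is bookkeeping once the $\tau^2=0$ collapse is internalized.
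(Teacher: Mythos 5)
Your proof is correct, and since the paper simply declares this lemma ``clear'' and omits the argument, your verification via the collapse $\tau^2=0$ in $R[[\bfs,\tau]]_{\Delta\times\{0,1\}}$ is exactly the intended one. Nothing to add.
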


The following proposition is proved in \cite[Proposition 1.3.7]{HS-vs-der}.

\begin{prop} \label{prop:xi-varepsilon}
For each $r\in \U^p(R;\Delta)$, the following properties hold:
\begin{enumerate}
\item[(1)] $ r^* (\sigma^i\sbullet r)  = \xi( \varepsilon^i(r))$, $  (\sigma^i\sbullet r) r^*  = \xi( \overline{\varepsilon}^i(r))$.
\item[(2)] $ r^* (\sigma\sbullet r)  = \xi( \varepsilon(r))$, $ (\sigma\sbullet r) r^*   = \xi( \overline{\varepsilon}(r))$.
\end{enumerate}
\end{prop}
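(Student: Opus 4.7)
The plan is to reduce the identity to a direct computation by exploiting that the co-ideal $\Delta \times \{0,1\}$ effectively truncates the $\tau$-expansion modulo $\tau^2$, which turns each substitution map $\sigma^i$ (resp.\ $\sigma$) into the infinitesimal action of the Euler derivation $\chi^i$ (resp.\ $\bchi$).

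First, I would unwind what $\sigma^i \sbullet r$ looks like. Since $\sigma^i$ has constant coefficients, $\sigma^i(\bfs^\alpha) = \bfs^\alpha(1+\tau)^{\alpha_i}$ in $k[[\bfs,\tau]]$; but in $k[[\bfs,\tau]]_{\Delta\times\{0,1\}}$ the monomial $\tau^2$ is killed, so
\[
\sigma^i(\bfs^\alpha) \equiv \bfs^\alpha + \alpha_i \bfs^\alpha \tau.
\]
Consequently, writing $r=\sum_\alpha r_\alpha \bfs^\alpha$,
\[
\sigma^i \sbullet r = \sum_\alpha \sigma^i(\bfs^\alpha)\, r_\alpha = r + \Bigl(\sum_\alpha \alpha_i r_\alpha \bfs^\alpha\Bigr) \tau = r + \chi^i_R(r)\,\tau.
\]
The analogous computation with $\sigma(s_j) = s_j + s_j\tau$ for all $j$ gives $\sigma \sbullet r = r + \bchi_R(r)\,\tau$, since $(1+\tau)^{|\alpha|} \equiv 1 + |\alpha|\tau$ modulo $\tau^2$.

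Second, I would multiply by $r^*$ (in $R[[\bfs,\tau]]_{\Delta\times\{0,1\}}$, where $r^*$ is viewed as $\iota \sbullet r^*$, i.e.\ with no $\tau$-component). One obtains
\[
r^*(\sigma^i\sbullet r) = 1 + r^*\,\chi^i_R(r)\,\tau = 1 + \varepsilon^i(r)\,\tau,
\qquad
(\sigma^i\sbullet r)r^* = 1 + \chi^i_R(r)\,r^*\,\tau = 1 + \overline{\varepsilon}^i(r)\,\tau,
\]
using the very definitions $\varepsilon^i(r) = r^*\chi^i_R(r)$ and $\overline{\varepsilon}^i(r) = \chi^i_R(r)r^*$ from Notation~\ref{notacion:varepsilon-fd}. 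By \ref{nume:explicit-varepsilon}(iv) we know $\varepsilon^i(r), \overline{\varepsilon}^i(r) \in R[[\bfs]]_{\Delta,+}$, so the definition of $\xi$ in Lemma~\ref{lemma:xi} yields exactly $\xi(\varepsilon^i(r)) = 1+\varepsilon^i(r)\tau$ and $\xi(\overline{\varepsilon}^i(r)) = 1+\overline{\varepsilon}^i(r)\tau$, proving (1). Part (2) follows identically with $\chi^i_R$ replaced by $\bchi_R$ and $\sigma^i$ by $\sigma$, or alternatively by summing over $i=1,\dots,p$ and invoking $\bchi = \sum_i \chi^i$ together with the identity $\varepsilon = \sum_i \varepsilon^i$ noted at the end of Notation~\ref{notacion:varepsilon-fd}.

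There is no real obstacle here: the only point requiring care is making sure the multiplication $r^*(\sigma^i\sbullet r)$ is carried out in the larger ring $R[[\bfs,\tau]]_{\Delta\times\{0,1\}}$ with the correct non-commutative ordering of the coefficients, since $R$ need not be commutative. This ordering is precisely what produces $\varepsilon^i$ on the left and $\overline{\varepsilon}^i$ on the right, matching the asymmetry in the two identities of the proposition.
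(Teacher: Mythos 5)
Your proof is correct. The paper itself gives no argument for this proposition (it simply cites Proposition~1.3.7 of the reference on Hasse--Schmidt versus classical derivations), and your direct computation --- expanding $\sigma^i(\bfs^\alpha)=\bfs^\alpha(1+\tau)^{\alpha_i}\equiv \bfs^\alpha+\alpha_i\bfs^\alpha\tau$ in the quotient by $\Delta\times\{0,1\}$, so that $\sigma^i\sbullet r=r+\chi^i_R(r)\tau$, and then multiplying by $r^*$ on the appropriate side to land exactly on the definitions of $\varepsilon^i$, $\overline{\varepsilon}^i$ and of $\xi$ --- is precisely the intended verification, with the non-commutative ordering of coefficients being the only delicate point and one you handle correctly.
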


The following lemma shows how the bracket of two elements of $R$ can be expressed in terms of the group operation in the $\U^p(R;\Delta)$ and of the action of substitution maps. Its proof is straightforward and it is left to the reader.

\begin{lemma} \label{lema:aux-brackets}
Let $\iota:A[[s]]_1 \to A[[s,s']]_{(1,1)}$, $\iota':A[[s]]_1 \to A[[s,s']]_{(1,1)}$ and $\varphi:A[[s]]_1 \to A[[s,s']]_{(1,1)}$ the substitution maps (with constant coefficients) given by $\iota(s) = s$, $\iota'(s)= s'$ and $\varphi(s)=s s'$. Then, for each $r,r'\in R$ we have:
$$
\left(\iota\sbullet (1 + r s) \right) \left( \iota'\sbullet (1 + r' s) \right) \left(\iota\sbullet (1 + r s)\right)^* \left( \iota'\sbullet (1 + r' s) \right)^* = \varphi\sbullet (1 + [r,r']s).
$$
\end{lemma}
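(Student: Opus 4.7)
The plan is to prove this by a direct expansion in the truncated power series ring $R[[s,s']]_{\{0,1\}\times\{0,1\}}$, exploiting the fact that this quotient kills every monomial of degree $\geq 2$ in $s$ or $\geq 2$ in $s'$. So the only surviving monomials are $1, s, s', ss'$.

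First, I would unpack the three substitution maps: by definition of $\sbullet$,
$$\iota\sbullet(1+rs) = 1+rs,\quad \iota'\sbullet(1+rs) = 1+rs',\quad \varphi\sbullet(1+[r,r']s) = 1+[r,r']ss',$$
all viewed inside $R[[s,s']]_{\{0,1\}\times\{0,1\}}$. Since $s^2=0$ in this ring, the multiplicative inverse of $1+rs$ is just $1-rs$; likewise $(1+r's')^* = 1-r's'$. Thus the LHS reduces to the concrete product $(1+rs)(1+r's')(1-rs)(1-r's')$.

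Next I would expand this product step by step, systematically discarding any term containing $s^2$ or $(s')^2$. A short calculation gives
$$(1+rs)(1+r's') = 1 + rs + r's' + rr'ss',$$
and then, using $s^2=0$ and the commutativity of the variables $s,s'$,
$$(1+rs)(1+r's')(1-rs) = 1 + r's' + rr'ss' - r'r\, ss' = 1 + r's' + [r,r']ss'.$$
Finally, multiplying on the right by $1-r's'$ and noting that both the term $r's'\cdot r's'$ and $[r,r']ss'\cdot r's'$ vanish (because each carries a factor $(s')^2$), I obtain
$$(1+rs)(1+r's')(1-rs)(1-r's') = 1 + [r,r']ss',$$
which equals the RHS.

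There is no genuine obstacle; the whole statement is a finite-order bookkeeping exercise in a nilpotent quotient ring, and the commutator appears precisely as the obstruction to exchanging the two non-commuting factors $rs$ and $r's'$ in the middle of the expansion. This is why the author calls the proof straightforward.
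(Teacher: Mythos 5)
Your computation is correct and is exactly the ``straightforward'' verification the paper leaves to the reader: identify the four surviving monomials $1,s,s',ss'$ in $R[[s,s']]_{(1,1)}$, note $(1+rs)^*=1-rs$ there, and expand. (Minor typo only: in your first display you wrote $\iota'\sbullet(1+rs)=1+rs'$ where you meant $\iota'\sbullet(1+r's)=1+r's'$, which you then use correctly.)
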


\subsection{Hasse--Schmidt derivations}
\label{section:HS}

In this section we recall some notions and results of the theory of Hasse--Schmidt derivations \cite{has37} as developed in \cite{nar_subst_2018}. 
From now on $k$ will be a commutative ring, $A$ a commutative $k$-algebra, $\bfs =\{s_1,\dots,s_p\}$ a set of variables and $\Delta \subset \N^p$ a non-empty co-ideal.

\begin{defi} \label{def:HS}
A {\em $(p,\Delta)$-variate Hasse--Schmidt derivation}, or 
a {\em $(p,\Delta)$-va\-riate HS-de\-ri\-va\-tion} for short, of $A$ over $k$  
 is a family $D=(D_\alpha)_{\alpha\in \Delta}$ 
 of $k$-linear maps $D_\alpha:A
\longrightarrow A$, satisfying the following Leibniz type identities: $$ D_0=\Id_A, \quad
D_\alpha(xy)=\sum_{\beta+\gamma=\alpha}D_\beta(x)D_\gamma(y) $$ for all $x,y \in A$ and for all
$\alpha\in \Delta$. We denote by
$\HS^p_k(A;\Delta)$ the set of all $(p,\Delta)$-variate HS-derivations of $A$ over
$k$. For $p=1$, a $1$-variate HS-derivation will be simply called a {\em Hasse--Schmidt derivation}   (a HS-derivation for short), or a {\em higher derivation}\footnote{This terminology is used for instance in \cite[\S 27]{mat_86}.}, and we will simply write $\HS_k(A;m):= \HS^1_k(A;\Delta)$ for $\Delta=\{q\in\N\ |\ q\leq m\}$.\footnote{These HS-derivations are called of length $m$ in \cite[\S 27]{mat_86}.}
\end{defi}

Any $(p,\Delta)$-variate HS-derivation $D$ of $A$ over $k$  can be understood as a power series 
$$ \sum_{\scriptscriptstyle \alpha\in\Delta} D_\alpha \bfs^\alpha \in R[[\bfs]]_\Delta,\quad R=\End_k(A),$$
and so we consider $\HS^p_k(A;\Delta) \subset R[[\bfs]]_\Delta$. Actually, 
$\HS^p_k(A;\Delta)$ is a (multiplicative) sub-group of $\U^p(R;\Delta)$. 
The group operation in $\HS^p_k(A;\Delta)$ is explicitly given by:
$$ (D,E) \in \HS^p_k(A;\Delta) \times \HS^p_k(A;\Delta) \longmapsto D \pcirc E \in \HS^p_k(A;\Delta)$$
with
$$ (D \pcirc E)_\alpha = \sum_{\scriptscriptstyle \beta+\gamma=\alpha} D_\beta \pcirc E_\gamma,
$$
and the identity element of $\HS^p_k(A;\Delta)$ is $\mathbb{I}$ with $\mathbb{I}_0 = \Id$ and 
$\mathbb{I}_\alpha = 0$ for all $\alpha \neq 0$. The inverse of a $D\in \HS^p_k(A;\Delta)$ will be denoted by $D^*$. 
\medskip

For $\Delta' \subset \Delta \subset \N^p$ non-empty co-ideals, we have truncations
$$\tau_{\Delta \Delta'}: \HS^p_k(A;\Delta) \longrightarrow \HS^p_k(A;\Delta'),
$$ which obviously are group homomorphisms. 
Since any $D\in\HS_k^p(A;\Delta)$ is determined by its finite truncations, we have a natural group isomorphism
\begin{equation} \label{eq:HS-inv-limit-finite}
 \HS_k^p(A;\Delta) =  \lim_{\stackrel{\longleftarrow}{\substack{\scriptscriptstyle \Delta' \subset \Delta\\ \scriptscriptstyle  \sharp \Delta'<\infty}}} \HS_k^p(A;\Delta').
\end{equation}

\subsection{The action of substitution maps on HS-derivations}

Now, we recall the action of substitution maps on HS-derivations \cite[\S 6]{nar_subst_2018} and the behavior of the $\varepsilon$-derivations of Notation \ref{notacion:varepsilon-fd} on HS-derivations \cite[\S 3]{HS-vs-der}.
\medskip

Let $\bfs =\{s_1,\dots,s_p\}$, $\bft =\{t_1,\dots,t_q\}$ be sets of variables, $\Delta \subset \N^p$, $\nabla \subset \N^q$ non-empty co-ideals and let us write $R=\End_k(A)$.
\medskip

For each substitution map $\varphi:A[[\bfs]]_\Delta \to A[[\bft]]_\nabla$, we know (see \ref{nume:def-sbullet}) that $\varphi \sbullet \U^p(R;\Delta) \subset \U^q(R;\nabla)$, and in fact we have $\varphi \sbullet \HS^p_k(A;\Delta) \subset \HS^q_k(A;\nabla)$ (see \cite[Proposition 10]{nar_subst_2018}). 
\medskip

For each $i=1,\dots,p$ and each $D\in \HS^p_k(A;\Delta)$ we know that (see Notation \ref{notacion:varepsilon-fd} and \cite[Proposition 3.1.2]{HS-vs-der}):
$$ \varepsilon^i(D), \varepsilon(D), \overline{\varepsilon}^i(D), \overline{\varepsilon}(D) \in \Der_k(A)[[\bfs]]_{\Delta,+}=\Der_k(A)[[\bfs]]_\Delta \cap R[[\bfs]]_{\Delta,+},
$$
and that the map $\xi: R[[\bfs]]_{\Delta,+} \to \U^{p+1}(R;\Delta \times \{0,1\})$ defined in Lemma \ref{lemma:xi} gives rise to a injective group homomorphism 
\begin{equation}   \label{eq:xi-HS}      
 \xi: \Der_k(A)[[\bfs]]_{\Delta,+} \longrightarrow \HS^{p+1}_k(A;\Delta \times \{0,1\})
\end{equation}
 whose image is the set of $D\in \HS^{p+1}_k(A;\Delta \times \{0,1\})$ such that $\supp D \subset \{(0,0)\} \cup ((\Delta \setminus \{0\}) \times \{1\})$.
\medskip

If we denote $\text{\rm D}^p_k(A;\Delta) := \HH^p(R;\Delta) \bigcap \left(\Der_k(A[[\bfs]]_{\Delta,+}\right)^p$,
Theorem 3.1.6 of \cite{HS-vs-der} tells us that, whenever $\QQ\subset k$, the 
diagram in Proposition \ref{prop:varepsilon-bijective-Q} induces a commutative diagram with bijective maps:
\begin{equation}
\begin{tikzcd}   \label{eq:varepsilon-HS-Q-bijective}   
\HS^p_k(A;\Delta)  
 \ar[r,"\bepsilon","\simeq"'] \ar[dr,"\varepsilon","\simeq"'] & \text{\rm D}^p_k(A;\Delta)  \ar[d,"\bSigma", "\simeq"'] 
 \\
 & \Der_k(A)[[\bfs]]_{\Delta,+}.
\end{tikzcd}
\end{equation}

\begin{defi} \label{def:D-element}
 Let $S$ be a $k$-algebra over $A$, $D\in\HS^p_k(A;\Delta)$ and $r\in\U^p(S;\Delta)$. We say that
 $r$ is a {\em $D$-element}  if 
$ r a = \widetilde{D}(a) r$ for all $a\in A[[\bfs]]_\Delta$. 
\end{defi}

For the ease of the reader, we include the following result (see \cite[Theorem 3.2.5]{HS-vs-der}).

\begin{thm} \label{thm:epsilon-bullet}
For  substitution map $\varphi: A[[\bfs]]_\Delta \to A[[\bft]]_\nabla$ and each HS-derivation $D\in \HS^p_k(A;\Delta)$, there exists a family 
$$\left\{ \left. {\bf N}^{j,i}_{e,h}\ \right\vert\ 1\leq j\leq q, 1\leq i \leq p, e\in \nabla, h \in \Delta, |h|\leq |e|\right\} \subset A
$$ 
such that for any $k$-algebra $S$ over $A$ and any $D$-element $r\in \U^p(S;\Delta)$, we have:
$$
\varepsilon^j_e(\varphi\sbullet r) = \sum_{\substack{\scriptscriptstyle   0<|h|\leq |e|\\ \scriptscriptstyle i\in \supp h}}
{\bf N}^{j,i}_{e,h}
\varepsilon^i_h(r)\quad \forall e\in \nabla,\ \forall j=1,\dots,q. 
$$
\end{thm}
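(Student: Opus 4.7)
The plan is to identify $\xi(\varepsilon^j(\varphi \sbullet r)) \in \U^{q+1}(S;\nabla\times\{0,1\})$ by rewriting it in a form that makes visible its expression as an $A$-linear combination of the $\varepsilon^i_h(r)$. The key input will be the $D$-element hypothesis, whose commutation rule $r a = \widetilde D(a)\, r$ for $a \in A[[\bfs]]_\Delta$ lets one normalize all $A$-scalars to one side of any product involving $r$ or $r^*$; concretely, extracting the $\bfs^\alpha$ coefficient, $r_\alpha\, a = \sum_{\beta+\gamma=\alpha} D_\beta(a)\,r_\gamma$.

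The first step reduces to a concrete expression for $\varepsilon^j_e(\varphi\sbullet r)$. By Proposition \ref{prop:xi-varepsilon}(1) applied to $\varphi \sbullet r \in \U^q(S;\nabla)$,
$$\xi(\varepsilon^j(\varphi\sbullet r)) = (\varphi \sbullet r)^*\bigl(\sigma^j \sbullet(\varphi \sbullet r)\bigr).$$
Since $\sigma^j$ has constant coefficients, $\sigma^j \sbullet(\varphi\sbullet r) = (\sigma^j \pcirc\varphi)\sbullet r$, and working in the co-ideal $\nabla \times\{0,1\}$ (where $\tau^2=0$), an application of the Leibniz rule for $\chi^j = t_j\,\partial/\partial t_j$ to $\varphi(\bfs^\alpha)$ yields $(\sigma^j \pcirc\varphi)(\bfs^\alpha) = \varphi(\bfs^\alpha) + \tau\,\chi^j(\varphi(\bfs^\alpha))$. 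Hence
$$\xi(\varepsilon^j(\varphi\sbullet r)) \;=\; 1 + \tau\,(\varphi\sbullet r)^*\,\chi^j_S(\varphi\sbullet r),$$
so $\varepsilon^j_e(\varphi\sbullet r)$ is the $\bft^e$-coefficient of $(\varphi\sbullet r)^*\,\chi^j_S(\varphi\sbullet r)$.

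Writing $\varphi(\bfs^\alpha) = \sum_e c^\alpha_e\,\bft^e$ with $c^\alpha_e \in A$ (depending only on $\varphi$), this coefficient is a polynomial in the non-commuting symbols $r_\alpha, r^*_\alpha$ with $A$-coefficients. I would then invoke the $D$-element commutation rule above (and its analogue for $r^*_\alpha$ obtained from $r^* = r^{-1}$) to migrate every $A$-factor to the left of every occurrence of $r_\gamma, r^*_\gamma$. In this normalized expression the only $A$-scalars that arise are universal combinations of the $c^\alpha_e$ and of components $D_\beta$, while the surviving $r$-factors cluster into products $r^*_\beta r_\gamma$ with $\beta+\gamma = h$, weighted as in formula \ref{nume:explicit-varepsilon}(iv) by $\gamma_i$. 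The regrouping then identifies the sum as $\sum_{i,h} {\bf N}^{j,i}_{e,h}\,\varepsilon^i_h(r)$ with ${\bf N}^{j,i}_{e,h} \in A$ depending only on $\varphi$ and $D$.

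The main obstacle is the combinatorial bookkeeping that ensures the regrouped sum is \emph{exactly} of the required shape and not a more general non-commutative polynomial in the $r_\alpha, r^*_\alpha$. The cleanest route is an induction on $|e|$: the contributions of top $r$-degree produce the terms with $|h|=|e|$ (each $r_\alpha$ appearing linearly, with coefficient coming purely from $\varphi$), while every use of the commutation rule $r_\alpha a = \sum_{\beta+\gamma=\alpha} D_\beta(a)\,r_\gamma$ strictly lowers the $r$-degree, matching the induction hypothesis at strictly smaller $|e|$. The same degree bookkeeping gives the bound $|h| \leq |e|$ and confines $i$ to $\supp h$, since the weight $\gamma_i$ in $\varepsilon^i_h(r) = \sum_{\beta+\gamma=h}\gamma_i r^*_\beta r_\gamma$ forces $i\in \supp\gamma \subseteq \supp h$.
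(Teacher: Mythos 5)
Your opening reduction is correct but carries almost no content: $\varepsilon^j_e(\varphi\sbullet r)$ being the $\bft^e$-coefficient of $(\varphi\sbullet r)^*\,\chi^j_S(\varphi\sbullet r)$ is just the definition of $\varepsilon^j$ unwound through Proposition \ref{prop:xi-varepsilon}. The entire content of the theorem sits in the step you describe as ``combinatorial bookkeeping'', and as described it does not go through. After expanding $(\varphi\sbullet r)^*$ and pushing all $A$-scalars to the left via $r_\alpha a=\sum_{\beta+\gamma=\alpha}D_\beta(a)r_\gamma$, what you obtain is a general non-commutative polynomial $\sum_w a_w\,r_{\gamma_1}\cdots r_{\gamma_l}$ with left $A$-coefficients, and nothing in your argument forces it into the very small $A$-submodule spanned by the $\varepsilon^i_h(r)=\sum_{\beta+\gamma=h}\gamma_i\,r^*_\beta r_\gamma$. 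Matching the terms linear in the $r_\alpha$ (your ``top degree'') only determines candidate coefficients ${\bf N}^{j,i}_{e,h}$; the theorem is the assertion that every word of length $\geq 2$ then carries exactly the right coefficient, which is a genuine identity, not bookkeeping, and your induction on $|e|$ does not produce it. A telling symptom: your argument never uses that $\varphi$ is an algebra map, i.e.\ $\varphi(\bfs^\alpha)=\prod_i\varphi(s_i)^{\alpha_i}$, yet the statement is false without this. For $p=q=1$, $\Delta=\nabla=\{0,1,2\}$ and an $A$-linear assignment with $s\mapsto at$, $s^2\mapsto bt^2$, the normalized $t^2$-coefficient is $2br_2-a^2r_1^2-aD_1(a)r_1$, which lies in $A\varepsilon_2(r)+A\varepsilon_1(r)$ for all $D$-elements $r$ only when $b=a^2$.

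The argument that works (in substance that of \cite[Theorem 3.2.5]{HS-vs-der}, which the present paper only quotes) needs three ingredients absent from your proposal: (a) multiplicativity of $\varphi$, giving $\chi^j(\varphi(\bfs^\alpha))=\sum_{i\in\supp\alpha}\alpha_i\,\varphi(s_i)^{\alpha_i-1}\chi^j(\varphi(s_i))\prod_{l\neq i}\varphi(s_l)^{\alpha_l}$; (b) the identity $\chi^i_S(r)=r\,\varepsilon^i(r)$, i.e.\ $\alpha_i r_\alpha=\sum_{\beta+\gamma=\alpha}r_\beta\,\varepsilon^i_\gamma(r)$, which converts the Euler weights into $\varepsilon^i$-factors sitting on the right; and (c) the fact that $\varphi\sbullet r$ is a $(\varphi\sbullet D)$-element, so that after reassembling the $\beta$-sum into $\varphi\sbullet r$ the remaining $A[[\bft]]_\nabla$-scalar can be conjugated through it. This yields
$$
\varepsilon^j(\varphi\sbullet r)\;=\;\sum_{i=1}^p\ \sum_{\gamma:\,\gamma_i>0}\widetilde{(\varphi\sbullet D)^*}\!\left(\chi^j(\varphi(s_i))\,\varphi(s_i)^{\gamma_i-1}\textstyle\prod_{l\neq i}\varphi(s_l)^{\gamma_l}\right)\varepsilon^i_\gamma(r),
$$
whose $\bft^e$-coefficients are the ${\bf N}^{j,i}_{e,h}$; the condition $\ord\varphi(s_l)\geq 1$ gives the bound $|h|\leq|e|$, and $\gamma_i>0$ gives $i\in\supp h$. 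Without steps (a)--(c) your normalization strategy cannot close.
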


\section{Main results}

\subsection{The integrable connection associated with a HS--module}

Throughout this section $k$ will be a commutative ring and $A$ a commutative $k$-algebra. 
First we recall the notions of HS-structure and HS-module (see \cite[\S 3.1]{nar_envelop}).

\begin{defi} \label{def:HS-structure}
Let $R$ be a $k$-algebra over $A$. A {\em HS-structure}  on $R$ over $A/k$ is a system of maps
$ \Uppsi= \left\{\Uppsi^p_\Delta: \HS^p_k(A;\Delta) \longrightarrow \U^p(R;\Delta),\ p\in \N, \Delta\in\coide{\N^p}\right\}
$
such that\footnote{Actually, from (\ref{eq:U-inv-limit-finite}) and (\ref{eq:HS-inv-limit-finite}) we could restrict ourselves to non-empty \underline{finite} co-ideals.}:
\begin{enumerate}
\item[(i)] The $\Uppsi^p_\Delta$ are group homomorphisms.
\item[(ii)] (Leibniz rule) For any $D\in \HS^p_k(A;\Delta)$, $\Uppsi^p_\Delta(D)$ is a $D$-element (see Definition \ref{def:D-element}), i.e. $\Uppsi^p_\Delta(D)\, a = \widetilde{D}(a) \Uppsi^p_\Delta(D) $ for all $a\in A$.
\item[(iii)] For any substitution map $\varphi\in\Sub_A(p,q;\Delta,\nabla)$ and for any $D\in \HS^p_k(A;\Delta)$ we have $\Uppsi^q_\nabla(\varphi\sbullet D) = \varphi\sbullet \Uppsi^p_\Delta(D)$.
\end{enumerate}
If $R'$ is another $k$-algebra over $A$ and $f:R\to R'$ is a map of $k$-algebras over $A$, then any HS-structure $\Uppsi$ on $R$ over $A/k$ gives rise to a HS-structure $f\pcirc \Uppsi$ on $R'$ over $A/k$ defined as
$$\left( f\pcirc \Uppsi \right)^p_\Delta := \overline{f} \pcirc \Uppsi^p_\Delta,\quad p\in \N, \Delta\in\coide{\N^p}.
$$
If $R$ is filtered, we will say that a HS-structure $\Uppsi$ on $R$ over $A/k$ is {\em filtered}  if 
$$\Uppsi^p_\Delta(\HS^p_k(A;\Delta) ) \subset \Ufil^p(R;\Delta)\quad \forall p\in\N,\ \forall \Delta\in\coide{\N^p}.
$$
If $ \Uppsi$ is a {\em HS-structure} on $R$ over $A/k$, $\alpha\in \N^p$ and $\Delta = \{\alpha'\in\N^p\ |\ \alpha'\leq \alpha\}$, we will simply denote $\Uppsi^p_\alpha := \Uppsi^p_\Delta$.
\end{defi}

\begin{exam} \label{exam:tauto-HS-Diff}
The inclusions 
$\HS^p_k(A;\Delta) \subset \U^p(\DD_{A/k};\Delta) \subset \U^p(\End_k(A);\Delta)
$ give rise to the ``tautological'' HS-structures on $\DD_{A/k}$ and on $\End_k(A)$ over $A/k$, which are obviously filtered.
\end{exam}

\begin{defi}
(1) 
A {\em left HS-module} (resp. a  {\em right HS-module}) over $A/k$ is an $A$-module $E$ endowed with a HS-structure on $\End_k(E)$ (resp. on $\End_k(E)^{\text{\rm opp}}$) over $A/k$.
\smallskip

\noindent (2) A HS-map from a left (resp. a right) HS-module $(E,\Phi)$ to a left (resp. to a right) HS-module $(F,\Uppsi)$ is an $A$-linear map $f:E\to F$ such that $\overline{f} \pcirc \Phi^p_\Delta(D) = \Uppsi^p_\Delta(D) \pcirc \overline{f}$
for all $p\in\N$,  for all $\Delta\in\coide{\N^p}$, for all $\alpha\in\Delta$ and for all 
$D\in \HS^p_k(A;\Delta)$.
\end{defi}

The notions of HS-structure and HS-module are inspired by the notions of ``admissible map'' of a Lie--Rinehart algebra (cf. \cite[\S 2]{rine-63} and \cite[\S 2]{MR2000f:53109})  and of integrable connection. Let us recall (a convenient version of) these notions.

\begin{defi} \label{defi:LR-admi}
Let $R$ be a $k$-algebra over $A$. We say that a map $\nabla: \Der_k(A) \to R$ is {\em LR-admissible} (LR for Lie--Rinehart) if the following conditions hold:
\begin{enumerate}
\item[i)] $\nabla$ is left $A$-linear.
\item[ii)] (Leibniz rule) $\nabla(\delta) a = a \nabla(\delta) + \delta(a) 1_R$ for all $\delta \in \Der_k(A)$ and all $a\in A$.
\item[iii)] $\nabla([\delta,\delta']) = [\nabla(\delta),\nabla(\delta')]$ for all $\delta,\delta' \in \Der_k(A)$.
\end{enumerate}
\end{defi}

\begin{defi} A left (resp. right) {\em integrable connection} on an $A$-module $E$ over $A/k$ is a LR-admissible map $\nabla: \Der_k(A) \to \End_k(E)$ (resp. $\nabla: \Der_k(A) \to \End_k(E)^{\text{\rm opp}}$).
\end{defi}

\begin{nota} The above definition differs slightly from J.L. Koszul's one as presented in \cite[Definitions 2.4 and 2.14]{del_70}. Both definitions coincide whenever the $A$-module of differential forms $\Omega_{A/k}$ is projective of finite rank.
\end{nota}

The goal of this section is to show that any HS-structure on $R$ over $A/k$ gives rise to a natural LR-admissible map $\Der_k(A) \to R$, and consequently, that any HS-module over $A/k$ carries a natural integrable connection.
\medskip

Let us notice that for any $k$-algebra $R$ over $A$, we may identify the groups $(R,+)$ and $\U(R;1)$ through the natural group isomorphism
\begin{equation} \label{eq:R-U(R;1)}
 r \in R \longmapsto (1 + rs) \in \U(R;1).
\end{equation}
Moreover, this map translates the $(A;A)$-bimodule structure on $R$ into the action of substitution maps in $\Sub_A(1,1;\{0,1\},\{0,1\}) \equiv A$. Namely, for each $a\in A$ and each $r\in R$, we have:
\begin{eqnarray*}
&
ar \in R \longmapsto (1+ars) = a \sbullet (1+rs) \in \U(R;1),
&
\\
&
ra \in R \longmapsto (1+ras) = (1+rs)\sbullet a \in \U(R;1).
\end{eqnarray*}
In the same vein we know that the map
\begin{equation} \label{eq:Der-HS1}
 \delta \in \Der_k(A) \longmapsto (1 + \delta s) \in \HS_k(A;1)
\end{equation}
is an isomorphism of groups, where we are considering the addition as internal operation in $\Der_k(A)$. Moreover, this map also translates the left $A$-module structure on $\Der_k(A)$ into the left action of substitution maps in $\Sub_A(1,1;\{0,1\},\{0,1\}) \equiv A$.
\medskip

Assume that $ \Uppsi= \left\{\Uppsi^p_\Delta: \HS^p_k(A;\Delta) \longrightarrow \U^p(R;\Delta),\ p\in \N, \Delta\in\coide{\N^p}\right\}$ is a HS-struc\-ture on $R$ over $A/k$ and let us denote by $\nabla: \Der_k(A) \to R$ the homomorphism of additive groups defined by the following commutative diagram:
$$
\begin{CD}
\Der_k(A) @>{\nabla}>> R\\
@V{\text{(\ref{eq:Der-HS1})}}V{\simeq}V @V{\simeq}V{\text{(\ref{eq:R-U(R;1)})}}V\\
\HS_k(A;1) @>{\Uppsi^1_1}>> \U(R;1).
\end{CD}
$$
Explicitly: $\Uppsi^1_1(\Id+\delta s) = 1 + \nabla(\delta) s$.

\begin{prop} \label{prop:HS-struc-LR-admi}
With the above notations, the map  $\nabla: \Der_k(A) \to R$ is LR-admissible.
\end{prop}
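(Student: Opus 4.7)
The plan is to check the three defining properties of an LR-admissible map (additivity/left $A$-linearity, Leibniz rule, bracket compatibility) in turn, using the three axioms of a HS-structure: (i) that each $\Uppsi^p_\Delta$ is a group homomorphism, (ii) the Leibniz rule $\Uppsi^p_\Delta(D)\,a = \widetilde{D}(a)\Uppsi^p_\Delta(D)$, and (iii) compatibility with substitution maps.

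First, additivity of $\nabla$ is immediate: the isomorphisms (\ref{eq:R-U(R;1)}) and (\ref{eq:Der-HS1}) turn addition of derivations and of elements of $R$ into the group operations on $\HS_k(A;1)$ and $\U(R;1)$, so $\nabla$ is a homomorphism of additive groups by (i). For left $A$-linearity, I would use the substitution map $\varphi \in \Sub_A(1,1;\{0,1\},\{0,1\})$ corresponding to $a\in A$, namely $s\mapsto as$. Under the identifications recalled just before the statement, $\varphi\sbullet(\Id+\delta s)=\Id+a\delta s$ and $\varphi\sbullet(1+\nabla(\delta)s)=1+a\nabla(\delta)s$, so axiom (iii) gives $\nabla(a\delta)=a\nabla(\delta)$.

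The Leibniz rule is extracted from axiom (ii) applied to $D=\Id+\delta s\in\HS_k(A;1)$: we have $\widetilde{D}(a)=a+\delta(a)s$ in $A[[s]]_{\{0,1\}}$, so
\[
(1+\nabla(\delta)s)\,a = (a+\delta(a)s)(1+\nabla(\delta)s).
\]
Expanding modulo $s^2$ (which vanishes in $R[[s]]_{\{0,1\}}$) and comparing the coefficient of $s$ yields $\nabla(\delta)\,a = a\nabla(\delta)+\delta(a)\,1_R$.

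The bracket identity is the step I expect to require the most care, but Lemma \ref{lema:aux-brackets} does essentially all the work. I would apply that lemma twice, in two different $k$-algebras. First, in $R=\End_k(A)$ with $r=\delta$, $r'=\delta'$: noting that $\iota\sbullet(1+\delta s)=1+\delta s$ and $\iota'\sbullet(1+\delta' s)=1+\delta' s'$ are genuine elements of $\HS^2_k(A;(1,1))$ (and likewise $\varphi\sbullet(1+[\delta,\delta']s)=1+[\delta,\delta']\,ss'$), the lemma gives an identity inside $\HS^2_k(A;(1,1))$. Applying $\Uppsi^2_{(1,1)}$ to both sides and using axioms (i) and (iii) to pull $\Uppsi$ through the products, inverses and substitution maps, the left-hand side becomes
\[
(1+\nabla(\delta)s)(1+\nabla(\delta')s')(1+\nabla(\delta)s)^*(1+\nabla(\delta')s')^*,
\]
while the right-hand side becomes $1+\nabla([\delta,\delta'])\,ss'$. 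Second, I apply the same lemma directly in $R$ with $r=\nabla(\delta)$, $r'=\nabla(\delta')$, which identifies the same left-hand side with $1+[\nabla(\delta),\nabla(\delta')]\,ss'$. Comparing the coefficients of $ss'$ gives $\nabla([\delta,\delta'])=[\nabla(\delta),\nabla(\delta')]$, completing the proof.
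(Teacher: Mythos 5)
Your proof is correct and follows essentially the same route as the paper: properties i) and ii) are read off from axioms (iii) and (ii) of a HS-structure via the identifications (\ref{eq:R-U(R;1)}) and (\ref{eq:Der-HS1}), and the bracket identity is obtained by applying Lemma \ref{lema:aux-brackets} twice (once inside $\HS^2_k(A;(1,1))$ and once inside $\U^2(R;(1,1))$) and transporting one side to the other with $\Uppsi^2_{(1,1)}$ using axioms (i) and (iii). No gaps.
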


\begin{proof} We need to prove properties i), ii), iii) in Definition \ref{defi:LR-admi}. Clearly, Property i) comes from property (iii) in Definition \ref{def:HS-structure} and property ii) comes from property (ii) in Definition \ref{def:HS-structure}.
\medskip

To prove property iii),
let us consider the substitution maps $\iota:A[[s]]_1 \to A[[s,s']]_{(1,1)}$, $\iota':A[[s]]_1 \to A[[s,s']]_{(1,1)}$ and $\varphi:A[[s]]_1 \to A[[s,s']]_{(1,1)}$ given by $\iota(s) = s$, $\iota'(s)= s'$ and $\varphi(s)=s s'$, 
and let us write $u:= \nabla(\delta)$, $u':= \nabla(\delta')$, $u'':= \nabla([\delta,\delta'])$, 
$v:= 1+us$, $v':= 1+u's$, $v'':= 1+u''s$, $w:= \Id + \delta s$, $w':= \Id + \delta' s$ and $w'':= \Id + [\delta,\delta'] s$. We have
$\Uppsi^1_1(w) = v$, $\Uppsi^1_1(w') = v'$ and
$\Uppsi^1_1(w'') = v''$, and since $\Uppsi$ is compatible with the action of substitution maps, we have:
\begin{eqnarray*}
& 1 + u'' s s' = \varphi \sbullet v'' = \varphi \sbullet \Uppsi^1_1(\Id + [\delta,\delta'] s) =
\Uppsi^2_{(1,1)}\left(\varphi \sbullet (\Id + [\delta,\delta'] s)\right) \stackrel{(\star)}{=} &
\\
&
\Uppsi^2_{(1,1)}\left( \left(\iota\sbullet w \right) \left( \iota'\sbullet w' \right) \left(\iota\sbullet w\right)^* \left( \iota'\sbullet w' \right)^*  \right) =
&
\\
&
\Uppsi^2_{(1,1)}\left(\iota\sbullet w \right) 
\Uppsi^2_{(1,1)}\left( \iota'\sbullet w' \right) 
\Uppsi^2_{(1,1)}\left(\left(\iota\sbullet w\right)^* \right) 
\Uppsi^2_{(1,1)}\left(\left( \iota'\sbullet w' \right)^* \right) =
&
\\
&
\left(\iota\sbullet \Uppsi^1_1 (w) \right) 
\left(\iota'\sbullet \Uppsi^1_1 (w') \right) 
\left(\iota\sbullet \Uppsi^1_1 (w) \right)^*
\left(\iota'\sbullet \Uppsi^1_1 (w') \right)^* =
&
\\
&
\left(\iota\sbullet v \right) 
\left(\iota'\sbullet  v' \right) 
\left(\iota\sbullet  v \right)^*
\left(\iota'\sbullet  v' \right)^* \stackrel{(\star)}{=} \varphi\sbullet (1+[u,u']s) = 1+[u,u']ss',
&
\end{eqnarray*}
where the $(\star)$ comes from Lemma \ref{lema:aux-brackets},
and so $\nabla([\delta,\delta']) = u'' = [u,u'] = [\nabla(\delta),\nabla(\delta')]$.
\end{proof}

\begin{cor} \label{cor:HS-mod->IC}
Any left (resp. right) HS-module $(E,\Uppsi)$ over $A/k$ carries a natural left (resp. right) integrable connection $\nabla: \Der_k(A) \to \End_k(E)$ (resp. $\nabla: \Der_k(A) \to \End_k(E)^{\text{\rm opp}}$) given by:
$$  \Uppsi^1_1(\Id+\delta s) = 1 + \nabla(\delta) s,\quad \forall \delta\in \Der_k(A).
$$
\end{cor}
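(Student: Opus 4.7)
The corollary is essentially an immediate specialization of Proposition \ref{prop:HS-struc-LR-admi}, so my plan is essentially to unwind the definitions and observe that the work has already been done. First I would recall that, by the very definition of a left (resp. right) HS-module over $A/k$, the datum $\Uppsi$ is a HS-structure on the $k$-algebra $R = \End_k(E)$ (resp. $R = \End_k(E)^{\text{\rm opp}}$) over $A$, where the $A$-algebra structure $\iota: A \to R$ is the one coming from the $A$-module structure of $E$.

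Next, I would apply Proposition \ref{prop:HS-struc-LR-admi} to this $R$ and this $\Uppsi$, obtaining an LR-admissible map $\nabla: \Der_k(A) \to R$ characterized by the commutative diagram
$$
\begin{CD}
\Der_k(A) @>{\nabla}>> R\\
@V{\simeq}VV @VV{\simeq}V\\
\HS_k(A;1) @>{\Uppsi^1_1}>> \U(R;1)
\end{CD}
$$
where the vertical isomorphisms are (\ref{eq:Der-HS1}) and (\ref{eq:R-U(R;1)}). Reading off this diagram gives precisely the stated identity $\Uppsi^1_1(\Id + \delta s) = 1 + \nabla(\delta) s$ for every $\delta \in \Der_k(A)$.

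Finally, I would invoke the definition of integrable connection: a left (resp. right) integrable connection on $E$ is by definition an LR-admissible map into $\End_k(E)$ (resp. $\End_k(E)^{\text{\rm opp}}$). Thus the LR-admissible map produced by Proposition \ref{prop:HS-struc-LR-admi} \emph{is} a left (resp. right) integrable connection on $E$, which completes the proof.

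There is essentially no obstacle: all the nontrivial content (left $A$-linearity, Leibniz rule, compatibility with brackets) has been verified in Proposition \ref{prop:HS-struc-LR-admi}, where the brackets step used Lemma \ref{lema:aux-brackets} together with compatibility with substitution maps. The corollary is a packaging statement making explicit the particular case $R = \End_k(E)$ (resp. $R = \End_k(E)^{\text{\rm opp}}$).
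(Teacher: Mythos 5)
Your proposal is correct and matches the paper's intent exactly: the corollary is stated without proof precisely because it is the specialization of Proposition \ref{prop:HS-struc-LR-admi} to $R = \End_k(E)$ (resp. $R = \End_k(E)^{\text{\rm opp}}$), combined with the definitions of HS-module and integrable connection. Nothing is missing.
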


LR-admissible map $\nabla: \Der_k(A) \to R$ in Proposition \ref{prop:HS-struc-LR-admi} satisfies a remarkable compatibility with respect to the maps $\varepsilon^i,\varepsilon,\overline{\varepsilon}^i,\overline{\varepsilon}:\U^p(R;\Delta) \to R[[\bfs]]_{\Delta,+}$  (see Notation \ref{notacion:varepsilon-fd}).

\begin{prop} \label{prop:compatib-nabla-epsilon}
Under the above hypotheses, for each integer $p\geq 1$, for each $\Delta \subset \N^p$ non-empty co-ideal and for each $i=1,\dots,p$, the following diagram is commutative:
$$
\begin{tikzcd}
\HS^p_k(A;\Delta) \ar[r,"\Uppsi^p_\Delta"] \ar[d,"\varepsilon^i"'] & \U^p(R;\Delta) \ar[d,"\varepsilon^i"]\\
\Der_k(A)[[\bfs]]_{\Delta,+} \ar[r,"\overline{\nabla}"] & R[[\bfs]]_{\Delta,+},
\end{tikzcd}
$$
where $\bfs = \{s_1,\dots,s_p\}$ and $\overline{\nabla}$ is the obvious map induced by $\nabla$.
\end{prop}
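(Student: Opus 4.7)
Fix $D\in\HS^p_k(A;\Delta)$ and set $r:=\Uppsi^p_\Delta(D)\in\U^p(R;\Delta)$; we must show $\varepsilon^i(r)=\overline{\nabla}(\varepsilon^i(D))$ in $R[[\bfs]]_{\Delta,+}$. The strategy is to transport everything into $\HS^{p+1}_k(A;\Delta\times\{0,1\})$ via the injective map $\xi$ of Lemma \ref{lemma:xi}/(\ref{eq:xi-HS}), where the HS-structure axioms can be applied cleanly.

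\textbf{Step 1 (reformulation via $\xi$).} By Proposition \ref{prop:xi-varepsilon}(1) applied to $D$ (inside $\HS^{p+1}_k(A;\Delta\times\{0,1\})$) and to $r$ (inside $\U^{p+1}(R;\Delta\times\{0,1\})$),
$$(\iota\sbullet D)^*\cdot(\sigma^i\sbullet D)=\xi(\varepsilon^i(D)),\qquad (\iota\sbullet r)^*\cdot(\sigma^i\sbullet r)=\xi(\varepsilon^i(r)).$$
Apply $\Uppsi^{p+1}_{\Delta\times\{0,1\}}$ to the first identity. Using Definition \ref{def:HS-structure}(i) (group homomorphism) together with (iii) (compatibility with the substitution maps $\iota$ and $\sigma^i$), its left-hand side becomes $(\iota\sbullet r)^*\cdot(\sigma^i\sbullet r)$, which by the second identity equals $\xi(\varepsilon^i(r))$. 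Hence
$$\xi(\varepsilon^i(r))=\Uppsi^{p+1}_{\Delta\times\{0,1\}}\bigl(\xi(\varepsilon^i(D))\bigr).$$
Since $\xi$ is injective, it suffices to prove the following sub-claim, whose specialization to $\delta=\varepsilon^i(D)$ yields the proposition:
$$(\star)\qquad \Uppsi^{p+1}_{\Delta\times\{0,1\}}\bigl(\xi(\delta)\bigr)=\xi(\overline{\nabla}(\delta))\qquad\text{for every }\delta\in\Der_k(A)[[\bfs]]_{\Delta,+}.$$

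\textbf{Step 2 (proof of $(\star)$).} The truncation maps $A[[\bfs]]_\Delta\to A[[\bfs]]_{\Delta'}$ for finite $\Delta'\subset\Delta$ are substitution maps, so $\Uppsi$ commutes with them; combined with (\ref{eq:HS-inv-limit-finite}) this reduces $(\star)$ to the case where $\Delta$ is finite. Then $\delta=\sum_{\alpha\in\Delta,\alpha\neq 0}\delta_\alpha\bfs^\alpha$ is a finite sum with $\delta_\alpha\in\Der_k(A)$. For each such $\alpha$ let $\eta_\alpha:A[[s]]_1\to A[[\bfs,\tau]]_{\Delta\times\{0,1\}}$ be the constant-coefficient substitution map $s\mapsto\bfs^\alpha\tau$ (well-defined since $\ord(\bfs^\alpha\tau)=|\alpha|+1\geq 2$ and $\tau^2=0$ in the target). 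Then $\eta_\alpha\sbullet(\Id+\delta_\alpha s)=\Id+\delta_\alpha\bfs^\alpha\tau$, and Definition \ref{def:HS-structure}(iii) combined with the defining identity $\Uppsi^1_1(\Id+\delta_\alpha s)=1+\nabla(\delta_\alpha)s$ gives
$$\Uppsi^{p+1}_{\Delta\times\{0,1\}}(\Id+\delta_\alpha\bfs^\alpha\tau)=\eta_\alpha\sbullet(1+\nabla(\delta_\alpha)s)=1+\nabla(\delta_\alpha)\bfs^\alpha\tau.$$
Since $\tau^2=0$ in $A[[\bfs,\tau]]_{\Delta\times\{0,1\}}$, elements of the shape $\Id+a_\alpha\bfs^\alpha\tau$ commute pairwise and their product equals $\Id+\sum_\alpha a_\alpha\bfs^\alpha\tau$. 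Therefore $\xi(\delta)=\prod_\alpha(\Id+\delta_\alpha\bfs^\alpha\tau)$, and applying the group homomorphism $\Uppsi^{p+1}_{\Delta\times\{0,1\}}$ term by term yields
$$\Uppsi^{p+1}_{\Delta\times\{0,1\}}(\xi(\delta))=\prod_\alpha(1+\nabla(\delta_\alpha)\bfs^\alpha\tau)=\xi(\overline{\nabla}(\delta)).$$

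\textbf{Main obstacle.} The decisive conceptual step is the translation via $\xi$ in Step 1: it converts the multiplicative identity produced by Proposition \ref{prop:xi-varepsilon} into an equality between HS-derivations that can be manipulated directly with the HS-structure axioms. Once $(\star)$ is isolated, its verification reduces by substitution-map compatibility to the one-variate defining formula for $\nabla$, and the inverse-limit/truncation argument handling the infinite case is routine.
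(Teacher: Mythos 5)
Your proof is correct and follows essentially the same route as the paper: both arguments push everything through the injective map $\xi$ into $\HS^{p+1}_k(A;\Delta\times\{0,1\})$, use Proposition \ref{prop:xi-varepsilon} together with axioms (i) and (iii) of a HS-structure to identify $\xi(\varepsilon^i(\Uppsi^p_\Delta(D)))$ with $\Uppsi^{p+1}_{\Delta\times\{0,1\}}(\xi(\varepsilon^i(D)))$, and then verify $\Uppsi^{p+1}_{\Delta\times\{0,1\}}(\xi(\delta))=\xi(\overline{\nabla}(\delta))$ by reducing to finite $\Delta$, factoring $\xi(\delta)$ into commuting terms $\Id+\delta_\alpha\bfs^\alpha\tau$, and applying the substitution map $t\mapsto\bfs^\alpha\tau$ to the defining identity $\Uppsi^1_1(\Id+\delta s)=1+\nabla(\delta)s$. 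The only difference is cosmetic: you make the implicit $\iota\sbullet(-)$ in Proposition \ref{prop:xi-varepsilon} explicit.
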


\begin{proof} Let us call $\sigma^i: A[[\bfs]]_\Delta \to A[[\bfs,\tau]]_{\Delta \times \{0,1\}}$ the substitution map given by $\sigma^i(s_j) = s_j$ if $j\neq i$ and $\sigma^i(s_i) = s_i + s_i \tau$, and $\Delta' = 
\Delta \times \{0,1\} \subset \N^{p+1}$.
\medskip

By using the injective map $\xi$ (see Lemma \ref{lemma:xi} and (\ref{eq:xi-HS})) and Proposition  \ref{prop:xi-varepsilon}, it is enough to prove the commutativity of the two following diagrams:
$$
\begin{tikzcd}
\HS^p_k(A;\Delta) \ar[r,"\Uppsi^p_\Delta"] \ar[d,"\xi \pcirc \varepsilon^i"'] & \U^p(R;\Delta) \ar[d,"\xi \pcirc\varepsilon^i"]\\
\HS^{p+1}_k(A;\Delta') \ar[r,"\Uppsi^{p+1}_{\Delta'}"'] & \U^{p+1}(R;\Delta')
\end{tikzcd}
$$
and
$$
\begin{tikzcd}
\Der_k(A)[[\bfs]]_{\Delta,+} \ar[r,"\overline{\nabla}"] \ar[d,"\xi"'] & R[[\bfs]]_{\Delta,+} \ar[d,"\xi"] \\
\HS^{p+1}_k(A;\Delta') \ar[r,"\Uppsi^{p+1}_{\Delta'}"'] & \U^{p+1}(R;\Delta').
\end{tikzcd}
$$
The commutativity of the first diagram is clear from properties (i) and (iii) in Definition \ref{def:HS-structure} and Proposition \ref{prop:xi-varepsilon}:
$$ \Uppsi^{p+1}_{\Delta'} (\xi(\varepsilon^i(D))) = \Uppsi^{p+1}_{\Delta'} (D^* (\sigma^i \sbullet D)) = 
\Uppsi^{p}_{\Delta} (D)^* (\sigma^i \sbullet \Uppsi^{p}_{\Delta} (D) ) = \xi(\varepsilon^i(\Uppsi^{p}_{\Delta} (D) )).
$$
For the commutativity of the second diagram, since all the involved maps are compatible with truncations and that any element in $\U^{p+1}(R;\Delta')$ is determined by its truncations to the $\Omega' = \Omega \times \{0,1\}$, with $\Omega \subset \Delta$ a non-empty finite co-ideal, we may assume that $\Delta$ is finite. In this case we have:
\begin{eqnarray*}
&\displaystyle 
\xi \left( \overline{\nabla} \left( \sum_{\scriptscriptstyle \alpha\in \Delta^*} \delta_\alpha \bfs^\alpha \right) \right) = \xi \left(\sum_{\scriptscriptstyle \alpha\in \Delta^*} \nabla(\delta_\alpha) \bfs^\alpha \right) =
1_R + \sum_{\scriptscriptstyle \alpha\in \Delta^*} \nabla(\delta_\alpha) \bfs^\alpha \tau =
&
\\
& \displaystyle 
\prod_{\scriptscriptstyle \alpha\in \Delta^*} (1_R+ \nabla(\delta_\alpha) \bfs^\alpha \tau)
\end{eqnarray*}
and
\begin{eqnarray*}
&\displaystyle 
\Uppsi^{p+1}_{\Delta'} \left( \xi \left( \sum_{\scriptscriptstyle \alpha\in \Delta^*} \delta_\alpha \bfs^\alpha \right) \right) = \Uppsi^{p+1}_{\Delta'} \left( \Id + \sum_{\scriptscriptstyle \alpha\in \Delta^*} \delta_\alpha \bfs^\alpha \tau \right) = 
&
\\
&\displaystyle 
\Uppsi^{p+1}_{\Delta'} \left( 
\prod_{\scriptscriptstyle \alpha\in \Delta^*} (\Id+ \delta_\alpha \bfs^\alpha \tau) \right) =
\prod_{\scriptscriptstyle \alpha\in \Delta^*} \Uppsi^{p+1}_{\Delta'} (\Id+ \delta_\alpha \bfs^\alpha \tau),
\end{eqnarray*}
where $\Delta^* = \Delta \setminus \{0\}$, and so it is enough to prove that:
$$
\Uppsi^{p+1}_{\Delta'} (\Id+ \delta \bfs^\alpha \tau) = 1_R+ \nabla(\delta) \bfs^\alpha \tau
$$
for all $\alpha \in \Delta^*$ and all $\delta\in \Der_k(A)$.
\medskip

Let $\varphi: A[[t]]_1 \to A[[\bfs,\tau]]_{\Delta'}$ be the substitution map given by $\varphi(t) = \bfs^\alpha \tau$. We have:
\begin{eqnarray*}
&
\Uppsi^{p+1}_{\Delta'} (\Id+ \delta \bfs^\alpha \tau) = \Uppsi^{p+1}_{\Delta'} (\varphi \sbullet (\Id + \delta t)) = \varphi \sbullet \Uppsi^1_1 (\Id + \delta t) =
&
\\
& \varphi \sbullet (1_R+ \nabla(\delta) t) = 1_R + \nabla(\delta) \bfs^\alpha \tau
\end{eqnarray*}
and we are done.
\end{proof}

\begin{cor} \label{cor:compatib-nabla-epsilon}  
Under the above hypotheses, for each integer $p\geq 1$ and for each non-empty co-ideal $\Delta \subset \N^p$, the following diagram is commutative:
$$
\begin{tikzcd}
\HS^p_k(A;\Delta) \ar[r,"\Uppsi^p_\Delta"] \ar[d,"\varepsilon"'] & \U^p(R;\Delta) \ar[d,"\varepsilon"]\\
\Der_k(A)[[\bfs]]_{\Delta,+} \ar[r,"\overline{\nabla}"] & R[[\bfs]]_{\Delta,+},
\end{tikzcd}
$$
where $\bfs = \{s_1,\dots,s_p\}$ and $\overline{\nabla}$ is the obvious map induced by $\nabla$.
\end{cor}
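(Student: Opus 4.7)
The plan is to deduce this corollary directly from Proposition \ref{prop:compatib-nabla-epsilon} by summing over $i=1,\dots,p$, using two facts already established: (a) the identity $\varepsilon = \sum_{i=1}^p \varepsilon^i$ recorded at the end of Notation \ref{notacion:varepsilon-fd}, which holds both on $\U^p(R;\Delta)$ and on $\HS^p_k(A;\Delta)$; and (b) the additivity of the map $\overline{\nabla}:\Der_k(A)[[\bfs]]_{\Delta,+}\to R[[\bfs]]_{\Delta,+}$, which is immediate from the fact that $\nabla:\Der_k(A)\to R$ is a homomorphism of additive groups (indeed even left $A$-linear, by Proposition \ref{prop:HS-struc-LR-admi}).

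Concretely, fix $D\in\HS^p_k(A;\Delta)$ and set $r:=\Uppsi^p_\Delta(D)\in\U^p(R;\Delta)$. Proposition \ref{prop:compatib-nabla-epsilon} gives, for every $i=1,\dots,p$, the equality $\varepsilon^i(r)=\overline{\nabla}(\varepsilon^i(D))$ inside $R[[\bfs]]_{\Delta,+}$. Summing over $i$ and using the additivity of $\overline{\nabla}$ yields
\[
\varepsilon(r)=\sum_{i=1}^p \varepsilon^i(r)=\sum_{i=1}^p \overline{\nabla}(\varepsilon^i(D))=\overline{\nabla}\!\left(\sum_{i=1}^p \varepsilon^i(D)\right)=\overline{\nabla}(\varepsilon(D)),
\]
which is precisely the commutativity of the displayed square.

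There is no real obstacle here: the whole content of the corollary is already packaged in the componentwise statement of Proposition \ref{prop:compatib-nabla-epsilon}, and the passage from the partial Euler derivations $\chi^i$ to the total Euler derivation $\bchi=\sum_i \chi^i$ is carried out once and for all in Notation \ref{notacion:varepsilon-fd}. The only point that deserves a line of justification is that $\overline{\nabla}$ commutes with the finite sum $\sum_{i=1}^p$, which is harmless since both sides lie in $R[[\bfs]]_{\Delta,+}$ and $\overline{\nabla}$ is defined coefficientwise from the additive map $\nabla$.
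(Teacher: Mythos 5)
Your proof is correct and follows exactly the paper's own route: the paper also deduces the corollary from Proposition \ref{prop:compatib-nabla-epsilon} by observing that $\varepsilon=\sum_{i=1}^p\varepsilon^i$, and you have merely spelled out the (harmless) additivity of $\overline{\nabla}$ that the paper leaves implicit.
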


\begin{proof} It is a straightforward consequence of the fact that $\varepsilon = \sum_{i=1}^p \varepsilon^i$.
\end{proof}

\begin{nota} 
Similar results to Proposition  \ref{prop:compatib-nabla-epsilon} and Corollary  \ref{cor:compatib-nabla-epsilon} hold for $\overline{\varepsilon}^i$ and $\overline{\varepsilon}$ instead of $\varepsilon^i$ and $\varepsilon$.
\end{nota}

\subsection{HS-enveloping algebras versus LR-enveloping algebras}

In this section, $k$ will be a commutative ring and $A$ a commutative $k$-algebra.
\medskip

First, we recall the notion of the enveloping algebra of Hasse--Schmidt derivations introduced in \cite[\S 3.3]{nar_envelop}.

\begin{prop} \label{prop:HS-struc-dU} (see Proposition 3.3.5 in {\em loc.~cit.}) 
There is a filtered $k$-algebra $\dU_{A/k}^{\text{\rm\tiny HS}}$ over $A$ endowed with a universal HS-structure $\Upupsilon$ over $A/k$, i.e. for any $k$-algebra $R$ over $A$ and any HS-structure $\Uppsi$ on $R$ over $A/k$, there is a unique map $f:\dU_{A/k}^{\text{\rm\tiny HS}} \to R$ of $k$-algebras over $A$ such that $f\pcirc \Upupsilon = \Uppsi$. Moreover, $\Upupsilon$ is a filtered HS-structure.
\end{prop}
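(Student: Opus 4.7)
The plan is to construct $\dU_{A/k}^{\text{\rm\tiny HS}}$ by generators and relations, so that the three axioms of Definition \ref{def:HS-structure} become defining relations of a quotient of a free $k$-algebra over $A$; the universal property then drops out essentially for free. Concretely, I would form the free (non-commutative) $k$-algebra $T$ over $A$ on the set of symbols $\bigl\{u^D_\alpha\bigr\}$ indexed by triples $(p,\Delta,D)$ with $p\geq 1$, $\Delta\in\coide{\N^p}$, $D\in\HS^p_k(A;\Delta)$, and $\alpha\in\Delta\setminus\{0\}$, setting $u^D_0:=1_T$, and introduce, for every such $D$, the universal series
\[
\Upupsilon^p_\Delta(D)\;:=\;1+\sum_{\alpha\in\Delta,\,\alpha\neq 0} u^D_\alpha\,\bfs^\alpha\;\in\;\U^p(T;\Delta).
\]
Let $J\subset T$ be the two-sided ideal generated by the $\bfs^\alpha$-coefficients of the three families of series identities: (R1) $\Upupsilon^p_\Delta(D\pcirc E)=\Upupsilon^p_\Delta(D)\Upupsilon^p_\Delta(E)$ for all $D,E\in\HS^p_k(A;\Delta)$, enforcing axiom (i); (R2) $\Upupsilon^p_\Delta(D)\,a=\widetilde{D}(a)\,\Upupsilon^p_\Delta(D)$ for all $a\in A$, enforcing axiom (ii); and (R3) $\Upupsilon^q_\nabla(\varphi\sbullet D)=\varphi\sbullet\Upupsilon^p_\Delta(D)$ for every $\varphi\in\Sub_A(p,q;\Delta,\nabla)$, enforcing axiom (iii). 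Set $\dU_{A/k}^{\text{\rm\tiny HS}}:=T/J$ and keep the notation $\Upupsilon$ for the image of the universal series; by construction $\Upupsilon$ is a HS-structure on $\dU_{A/k}^{\text{\rm\tiny HS}}$ over $A/k$.

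For the filtration, I would assign to $u^D_\alpha$ the filtration degree $\leq|\alpha|$ on $T$. The $\bfs^\alpha$-coefficient of each relation among (R1)--(R3) is a polynomial in the $u^D_\bullet$'s of total weight $\leq|\alpha|$: this is immediate for (R1) and (R2) (since $D_\beta(a)\in A$ has weight $0$), and follows for (R3) from $\ord\varphi(s_i)\geq 1$ (Definition \ref{def:substitution-maps}), which forces any monomial $u^D_h\cdots$ appearing in the $\bfs^\alpha$-coefficient of $\varphi\sbullet\Upupsilon^p_\Delta(D)$ to satisfy $|h|\leq|\alpha|$. Hence $J$ is compatible with the filtration on $T$, the quotient filtration on $\dU_{A/k}^{\text{\rm\tiny HS}}$ is well-defined, and $\Upupsilon^p_\Delta(D)\in\Ufil^p(\dU_{A/k}^{\text{\rm\tiny HS}};\Delta)$, so $\Upupsilon$ is a \emph{filtered} HS-structure. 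The universal property is then tautological: given any HS-structure $\Uppsi$ on a $k$-algebra $R$ over $A$, the $k$-algebra map $T\to R$ over $A$ sending $u^D_\alpha\mapsto\Uppsi^p_\Delta(D)_\alpha$ annihilates $J$ precisely because $\Uppsi$ satisfies (i)--(iii), and so descends uniquely to $f:\dU_{A/k}^{\text{\rm\tiny HS}}\to R$ with $f\pcirc\Upupsilon=\Uppsi$; uniqueness follows because the elements $\Upupsilon^p_\Delta(D)_\alpha$ generate $\dU_{A/k}^{\text{\rm\tiny HS}}$ as a $k$-algebra over $A$, and $f$ is automatically filtered.

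The one substantive point is non-degeneracy: one must check that $J$ is proper (so that $\dU_{A/k}^{\text{\rm\tiny HS}}\neq 0$) and that the structural map $A\to\dU_{A/k}^{\text{\rm\tiny HS}}$ remains injective. This is handed to us by Example \ref{exam:tauto-HS-Diff}: the tautological HS-structure on $\End_k(A)$ yields a $k$-algebra map $T\to\End_k(A)$ over $A$ that visibly annihilates (R1)--(R3) and is faithful on $A$, hence factors through $\dU_{A/k}^{\text{\rm\tiny HS}}$ and witnesses both properties simultaneously. That is the step I would expect to verify most carefully in a complete write-up; the remaining bookkeeping — writing (R1)--(R3) as explicit elements of $T$ rather than series identities, confirming the weight-at-most-$|\alpha|$ count term by term, and unpacking the universal property diagrammatically — is routine.
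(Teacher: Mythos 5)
The paper itself states this proposition without proof, deferring to Proposition 3.3.5 of \cite{nar_envelop}; your generators-and-relations construction is correct and is essentially the construction carried out there: present $\dU_{A/k}^{\text{\rm\tiny HS}}$ as the quotient of the free $k$-algebra over $A$ on symbols indexed by the nonzero coefficients of all HS-derivations, modulo the coefficientwise relations extracted from axioms (i)--(iii) of Definition \ref{def:HS-structure}, filtered by total weight $|\alpha|$, with the universal property then tautological and non-degeneracy witnessed by the tautological HS-structure of Example \ref{exam:tauto-HS-Diff}. The only slip is the closing remark that ``$f$ is automatically filtered'': in the universal property $R$ is an arbitrary (unfiltered) $k$-algebra over $A$, so this assertion only makes sense --- and then does hold --- when $\Uppsi$ is a filtered HS-structure on a filtered $R$; it is not part of the statement being proved.
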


The algebra $\dU_{A/k}^{\text{\rm\tiny HS}}$ is called the {\em enveloping algebra} of the Hasse--Schmidt derivations of $A$ over $k$. It generalizes the enveloping algebra of the Lie--Rinehart algebra $\Der_k(A)$, that now we recall.

\begin{prop} (see \cite[\S 2]{rine-63}) There is a filtered $k$-algebra $\dU_{A/k}^{\text{\rm\tiny LR}}$ over $A$ endowed with a universal LR-admissible map $\sigma: \Der_k(A) \to \dU_{A/k}^{\text{\rm\tiny LR}}$, i.e.
 for any $k$-algebra $R$ over $A$ and any  LR-admissible map $\uppsi: \Der_k(A) \to R $, there is a unique map $f:\dU_{A/k}^{\text{\rm\tiny LR}} \to R$ of $k$-algebras over $A$ such that $f\pcirc \sigma = \uppsi$. Moreover,  its graded ring is commutative and $\sigma$ induces a canonical map of graded $A$-algebras $\Sim_A \Der_k(A) \to \gr \dU_{A/k}^{\text{\rm\tiny LR}}$.
\end{prop}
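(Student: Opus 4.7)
The plan is to construct $\dU_{A/k}^{\text{\rm\tiny LR}}$ explicitly as a quotient of a tensor algebra, then read off the universal property and graded-commutativity directly from the defining relations. Write $L := \Der_k(A)$, regard $V := A \oplus L$ as a $k$-module, and form the tensor algebra $T := T_k(V)$ with its natural ascending filtration $T^{\leq n}$. Let $I \subset T$ be the two-sided ideal generated by the four families of elements $a \otimes b - ab$ and $1_A - 1_T$ (for $a,b \in A$), $a \otimes \delta - a\cdot \delta$ (for $a \in A$ and $\delta \in L$, using the left $A$-module structure on $L$), $\delta \otimes a - a \otimes \delta - \delta(a)$ (Leibniz), and $\delta \otimes \delta' - \delta' \otimes \delta - [\delta,\delta']$ (bracket). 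Set $\dU_{A/k}^{\text{\rm\tiny LR}} := T/I$, let $\iota: A \to \dU_{A/k}^{\text{\rm\tiny LR}}$ and $\sigma: L \to \dU_{A/k}^{\text{\rm\tiny LR}}$ be induced by the summand inclusions $A \hookrightarrow V$ and $L \hookrightarrow V$, and equip $\dU_{A/k}^{\text{\rm\tiny LR}}$ with the filtration given by the images of the $T^{\leq n}$.

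To establish universality, given a $k$-algebra $R$ over $A$ with structure map $\iota': A \to R$ and an LR-admissible map $\uppsi: L \to R$, the $k$-linear map $V \to R$ sending $a \mapsto \iota'(a)$ and $\delta \mapsto \uppsi(\delta)$ extends uniquely to a $k$-algebra morphism $T \to R$. Conditions (i)-(iii) of Definition \ref{defi:LR-admi}, together with the fact that $\iota'$ is a $k$-algebra map, imply that all four families of generators of $I$ lie in the kernel, yielding the required unique $f: \dU_{A/k}^{\text{\rm\tiny LR}} \to R$ of $k$-algebras over $A$ with $f \pcirc \sigma = \uppsi$. Uniqueness also follows because $\iota(A) \cup \sigma(L)$ generates $\dU_{A/k}^{\text{\rm\tiny LR}}$ as a $k$-algebra.

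For the graded statement I would observe that the bracket relation yields $\sigma(\delta)\sigma(\delta') - \sigma(\delta')\sigma(\delta) = \sigma([\delta,\delta'])$, which sits in filtration degree $1$, so its class in $\gr^2 \dU_{A/k}^{\text{\rm\tiny LR}}$ vanishes; the Leibniz relation analogously places $[\sigma(\delta),\iota(a)]$ in degree $0$. Hence $\gr \dU_{A/k}^{\text{\rm\tiny LR}}$ is commutative. The products $\sigma(\delta_1) \cdots \sigma(\delta_n)$, viewed modulo filtration $n-1$, are then symmetric in the $\delta_i$ and $A$-multilinear (using $\iota(a)\sigma(\delta) \equiv \sigma(a\cdot\delta) \pmod{\text{filtration } 0}$), and the universal property of $\Sim_A L$ produces the canonical graded $A$-algebra map $\Sim_A L \to \gr \dU_{A/k}^{\text{\rm\tiny LR}}$.

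The main obstacle is purely bookkeeping: one must verify that the four defining relations faithfully encode, respectively, the $k$-algebra structure of $A$, the left $A$-linearity, the Leibniz rule, and the bracket compatibility of $\sigma$, and that they translate directly into the vanishing of the analogous relations for $\uppsi$ under any candidate extension. No delicate result, such as PBW-type injectivity of $\Sim_A L \to \gr \dU_{A/k}^{\text{\rm\tiny LR}}$, is claimed in the proposition (this would require, for instance, $L$ projective over $A$, as in Rinehart's original theorem), so the argument proceeds entirely from the relations and costs nothing beyond a careful check that each generator of $I$ does indeed land in the kernel of the induced map $T \to R$.
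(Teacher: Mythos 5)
The paper offers no proof of this proposition at all: it is quoted from Rinehart \cite[\S 2]{rine-63}, where the enveloping algebra is built inside the universal enveloping algebra of the $k$-Lie algebra $A\oplus\Der_k(A)$ (with its semidirect-product bracket) by taking the subalgebra generated by the degree $\leq 1$ part and imposing the $A$-linearity identifications. Your generators-and-relations presentation $T_k(A\oplus L)/I$ is a standard equivalent alternative, and your verification of the universal property is correct: conditions i)--iii) of Definition \ref{defi:LR-admi} together with $\iota'$ being an algebra map kill exactly the four families of generators of $I$, and uniqueness follows since $\iota(A)\cup\sigma(L)$ generates. You are also right that no PBW-type injectivity is being claimed, so nothing delicate is needed for the graded statements. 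What your route buys is self-containedness and transparency of the relations; what Rinehart's buys is that the algebra is exhibited as a quotient of a subalgebra of an honest universal enveloping algebra, which is what one leans on when one \emph{does} want the PBW theorem later.

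One detail needs fixing: the ``natural ascending filtration'' of $T_k(V)$ by tensor degree places $V=A\oplus L$ in degree $1$, so with the filtration you propose $\iota(A)\subset F_1$ but not $F_0$; this violates the requirement $\iota(A)\subset R_0$ in Definition \ref{def:k-algebra-over-A}, makes $\gr^0=k$ rather than the image of $A$, and therefore breaks the assertion that $\Sim_A\Der_k(A)\to\gr\dU_{A/k}^{\text{\rm\tiny LR}}$ is a map of graded \emph{$A$-algebras}. The remedy is standard: filter $T_k(V)$ by the number of tensor factors lying in $L$ (weight $0$ on $A$, weight $1$ on $L$), so that $F_n$ is the $\iota(A)$-submodule generated by products of at most $n$ elements of $\sigma(L)$. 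With that normalization every step of your argument for commutativity of $\gr$ and for the symmetric, $A$-multilinear nature of the symbols $\sigma(\delta_1)\cdots\sigma(\delta_n)$ modulo $F_{n-1}$ goes through as written.
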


We deduce the existence of a unique map $\bupupsilon^{\text{\rm\tiny LR}}:\dU_{A/k}^{\text{\rm\tiny LR}} \longrightarrow \DD_{A/k}$ of filtered $k$-algebras over $A$ such that the following diagram is commutative:
\begin{equation*}  
\begin{tikzcd}
\dU_{A/k}^{\text{\rm\tiny LR}}  \ar[r,"\bupupsilon^{\text{\rm\tiny LR}}"]   &   \DD_{A/k} \\
&  \Der_k(A).\ar[ul,"\sigma"] \ar[u,"\text{\rm incl.}"'] 
\end{tikzcd}
\end{equation*}

\begin{defi} (Cf. \cite{brown_1978,mat-intder-I,nar_2012})  \label{def:HS-integ}
Let $m\geq 1$ be an integer or $m=\infty$, and $\delta:A\to A$ a $k$-derivation.
 We say that $\delta$ is {\em $m$-integrable} (over $k$)  if there is a
HS-derivation $D\in \HS_k(A;m)$ such that $D_1=\delta$. A such $D$ is called a $m$-integral of $\delta$. The set of $m$-integrable
$k$-derivations of $A$ is denoted by $\Ider_k(A;m)$. 
We say that $\delta$ is
{\em f-integrable} {\em (finite integrable)} if it is $m$-integrable for all \underline{integers} $m\geq 1$. The set of f-integrable
$k$-derivations of $A$ is denoted by $\Ider^f_k(A)$. 
\end{defi}

It is clear that the $\Ider_k(A;m)$ and $\Ider^f_k(A)$ are $A$-submodules of $\Der_k(A)$, and 
if $\QQ\subset k$, any $k$-derivation of $A$ is $\infty$-integrable, and so $\Der_k(A) = \Ider^f_k(A) = \Ider_k(A;\infty)$ (cf. \cite[p. 230]{mat-intder-I}).
\bigskip

\numero \label{nume:properties-U-HS}
Let us summarize the main properties of $(\dU_{A/k}^{\text{\rm\tiny HS}},\Upupsilon)$: 
\begin{enumerate}
\item[(i)] The tautological filtered HS-structure on $\DD_{A/k}$ in Example \ref{exam:tauto-HS-Diff} induces 
a canonical map $\bupupsilon^{\text{\rm\tiny HS}}:\dU_{A/k}^{\text{\rm\tiny HS}} \longrightarrow \DD_{A/k}$ of filtered $k$-algebras over $A$ (see Proposition 3.3.3 of \cite{nar_envelop}).
\item[(ii)]  The associated graded ring $\gr \dU_{A/k}^{\text{\rm\tiny HS}}$ is commutative (see Theorem 3.3.8 of \cite{nar_envelop}).
\item[(iii)] Let $\delta:A\to A$ be a f-integrable $k$-derivation and $m\geq 1$ an integer. If $D\in \HS_k(A;m)$ is a $m$-integral of $\delta$, then the symbols $\sigma_m(D_m) \in \gr^m \DD_{A/k}$ and $\sigma_m(\Upupsilon^1_m(D)_m) \in \gr^m \dU_{A/k}^{\text{\rm\tiny HS}}$ only depend of $\delta$ and not on the particular choice of the $m$-integral $D$ (see Corollary 2.7 of \cite{nar_2009} and Corollary 3.4.2 of \cite{nar_envelop}). Let us denote $\chi^m(\delta) := \sigma_m(D_m) \in \gr^m \DD_{A/k}$ and $\bupchi^m(\delta) :=
\sigma_m(\Upupsilon^1_m(D)_m) \in \gr^m \dU_{A/k}^{\text{\rm\tiny HS}}$.
\item[(iv)] Let us denote $\Gamma_A M$ the universal power divided algebra of the $A$-module $M$ and $\gamma_m: M \to \Gamma_A M$, $m\geq 1$, the universal power divided maps (cf. \cite[Appendix A]{bert_ogus}).
There are unique maps of graded $A$-algebras 
$$\vartheta^f: \Gamma_A  \Ider^f_k(A) \to \gr \DD_{A/k},\quad \bupvartheta:\Gamma_A \Ider^f_k(A) \to \gr^m \dU_{A/k}^{\text{\rm\tiny HS}}
$$
such that $\vartheta^f \pcirc \gamma_m = \chi^m$ and $\bupvartheta \pcirc \gamma_m = \bupchi^m$ for all $m\geq 1$ (see  (2.6) in \cite{nar_2009}\footnote{Actually, the existence of $\vartheta$ in this reference is proven for 
$\Ider_k(A;\infty)$ instead of $\Ider^f_k(A)$, but the proof in the second case remains essentially the same as in the first one.}
 and Corollary 3.4.3 of \cite{nar_envelop}). Moreover, the following diagram is commutative:
\begin{equation*}  
\begin{tikzcd}
\Gamma_A \Ider^f_k(A)  \ar[r,"\bupvartheta"]  \ar[rd,"\vartheta^f"'] &   \gr \dU_{A/k} \ar[d,"\gr \bupupsilon^{\text{\rm\tiny HS}}"]\\
 &  \gr \DD_{A/k}.
\end{tikzcd}
\end{equation*}
\item[(v)] If $\Ider^f_k(A)=\Der_k(A)$, then the map $\bupvartheta:\Gamma_A \Ider^f_k(A) \longrightarrow \gr \dU_{A/k}$ is surjective (see Proposition 3.4.4 of \cite{nar_envelop}).
\end{enumerate}
\bigskip

\noindent
After Proposition  \ref{prop:HS-struc-LR-admi}, the HS-structure $\Upupsilon$  on 
$\dU_{A/k}^{\text{\rm\tiny HS}}$ over $A/k$ (see Proposition \ref{prop:HS-struc-dU}) induces a natural LR-admissible map $\nabla^{\text{\rm\tiny HS}}: \Der_k(A) \to \dU_{A/k}^{\text{\rm\tiny HS}}$ given by
$$  \Upupsilon^1_1(\Id+\delta s) = 1 + \nabla^{\text{\rm\tiny HS}}(\delta) s,\quad \forall \delta \in \Der_k(A),
$$
which in turn induces a unique map of $k$-algebras over $A$:
\begin{equation} \label{eq:bkappa}
\bkappa: \dU_{A/k}^{\text{\rm\tiny LR}} \to \dU_{A/k}^{\text{\rm\tiny HS}}
\end{equation} 
such that $\bkappa \pcirc \sigma = \nabla^{\text{\rm\tiny HS}}$, 
which is obviously filtered and $\bupupsilon^{\text{\rm\tiny HS}} \pcirc \bkappa = \bupupsilon^{\text{\rm\tiny LR}}$.
\bigskip

The goal of this section is to prove the main result of this paper, namely, if $\QQ \subset k$, then the map (\ref{eq:bkappa}) is an isomorphism.
\medskip
 
From now on, we assume that $\QQ \subset k$. 
Let $R$ be a $k$-algebra over $A$ endowed with a LR-admissible map $ \nabla : \Der_k(A) \rightarrow R$ (see Definition \ref{defi:LR-admi}).

\begin{thm} \label{thm:from-nabla-to-Uppsi}
Under the above hypotheses, there is a unique HS-structure $\Uppsi = \left\{\Uppsi^p_\Delta\right\}$ on $R$ over $A/k$ such that for each $p\geq 1$ and each non-empty co-ideal $\Delta\subset \N^p$, the following diagram is commutative:
$$
\begin{tikzcd}
\HS^p_k(A;\Delta)  \ar[r,"\Uppsi^p_\Delta"] \ar[d,"\varepsilon"',"\simeq"] &
\U^p(R;\Delta) \ar[d,"\varepsilon","\simeq"']\\
\Der_k(A)[[\bfs]]_{\Delta,+} \ar[r,"\overline{\nabla}"] & R[[\bfs]]_{\Delta,+},
\end{tikzcd}
$$
where we denote $\bfs=\{s_1,\dots,s_p\}$ and
$\overline{\nabla}:\Der_k(A)[[\bfs]]_{\Delta,+} \to R[[\bfs]]_{\Delta,+}$ the left $A[[\bfs]]_\Delta$-linear map induced by $\nabla$: 
$$\overline{\nabla} \left( \sum \delta_\alpha \bfs^\alpha\right) = \sum \nabla(\delta_\alpha) \bfs^\alpha.
$$ 
Moreover, if $R = \cup_{d\geq 0} R_d$ is filtered and $\Ima \nabla \subset R_1$, then $\Uppsi$ is a filtered HS-structure.
\end{thm}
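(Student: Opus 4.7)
My plan: since $\QQ\subset k$, both $\varepsilon:\U^p(R;\Delta)\to R[[\bfs]]_{\Delta,+}$ (Proposition \ref{prop:varepsilon-bijective-Q}) and $\varepsilon:\HS^p_k(A;\Delta)\to\Der_k(A)[[\bfs]]_{\Delta,+}$ (diagram (\ref{eq:varepsilon-HS-Q-bijective})) are bijections. The prescribed diagram therefore forces
$$\Uppsi^p_\Delta := \varepsilon^{-1}\circ\overline{\nabla}\circ\varepsilon$$
as a set map, giving both existence and uniqueness of the underlying map; its compatibility with the truncations $\tau_{\Delta\Delta'}$ is automatic from that of $\varepsilon$, so by (\ref{eq:U-inv-limit-finite}) and (\ref{eq:HS-inv-limit-finite}) it suffices throughout to handle finite co-ideals $\Delta$. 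The bulk of the work is to verify that this set map satisfies the three axioms of Definition \ref{def:HS-structure}.

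For the Leibniz axiom (ii), I would characterise the $D$-element condition through the coefficients of $r=\sum r_\alpha\bfs^\alpha$: unwinding, $ra=\widetilde{D}(a)r$ is equivalent to a family of identities of the form $[r_{e_i},a]=D_{e_i}(a)$ at order one, together with their higher-order analogues. In degree one, the explicit inversion formula in (iv) of \ref{nume:explicit-varepsilon} gives $r_{e_i}=\varepsilon_{e_i}(r)=\nabla(D_{e_i})$, and the required identity is then exactly condition ii) of Definition \ref{defi:LR-admi}. The higher-order versions follow by induction on $|\alpha|$ from the same inversion and the Leibniz rule for $\nabla$. Once axiom (ii) is in hand, axiom (iii) follows directly from Theorem \ref{thm:epsilon-bullet}: its universal coefficients $\mathbf{N}^{j,i}_{e,h}\in A$ express $\varepsilon^j_e(\varphi\sbullet r)$ as an $A$-linear combination of the $\varepsilon^i_h(r)$ for any $D$-element $r$; applying this on both sides (to the tautological $D$-element $D$ on the HS-side and to the just-verified $D$-element $\Uppsi^p_\Delta(D)$ on the $R$-side), together with $A$-linearity of $\overline{\nabla}$, yields $\varepsilon(\varphi\sbullet\Uppsi^p_\Delta(D))=\overline{\nabla}(\varepsilon(\varphi\sbullet D))=\varepsilon(\Uppsi^q_\nabla(\varphi\sbullet D))$, whence $\varphi\sbullet\Uppsi^p_\Delta(D)=\Uppsi^q_\nabla(\varphi\sbullet D)$.

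I expect axiom (i), the group-homomorphism property, to be the main obstacle. The cleanest approach is to work with the refined bijection $\bepsilon$ from diagram (\ref{eq:varepsilon-HS-Q-bijective}): condition iii) of Definition \ref{defi:LR-admi} (bracket compatibility of $\nabla$) is exactly what ensures that $\overline{\nabla}$ carries $\text{\rm D}^p_k(A;\Delta)$ into $\HH^p(R;\Delta)$, via conditions (a) and (b) of Notation \ref{notacion:Lambda-bepsilon}. Equivalently, from $\varepsilon(r'r)=\varepsilon(r)+r^*\varepsilon(r')r$ in (i) of \ref{nume:explicit-varepsilon}, multiplicativity of $\Uppsi^p_\Delta$ reduces to the ``conjugation-compatibility'' identity
$$\overline{\nabla}\bigl(D^*\,\varepsilon(D')\,D\bigr) \;=\; \Uppsi^p_\Delta(D)^*\,\overline{\nabla}(\varepsilon(D'))\,\Uppsi^p_\Delta(D),$$
which I would verify using axiom (ii) (so that $\widetilde{D}$ and $\Uppsi^p_\Delta(D)$ have the same effect on elements of $A$), Lemma \ref{lema:aux-brackets} to rewrite commutators through the $\sbullet$-formalism, and condition iii) of Definition \ref{defi:LR-admi} to resolve the resulting brackets. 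This is the step where both the full LR-admissibility of $\nabla$ and the hypothesis $\QQ\subset k$ are essential.

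For the filtered assertion, assume $\Ima\nabla\subset R_1$, so that $\overline{\nabla}$ sends $\Der_k(A)[[\bfs]]_{\Delta,+}$ into $R_1[[\bfs]]_{\Delta,+}$. Solving $\varepsilon_\alpha(r)=\sum_{\beta+\gamma=\alpha}|\gamma|\,r^*_\beta r_\gamma$ (from (iv) of \ref{nume:explicit-varepsilon}) recursively for $r_\alpha$ in terms of lower-order data, and using that any product of $k$ factors in $R_1$ lies in $R_k$, one deduces $\Uppsi^p_\Delta(D)_\alpha\in R_{|\alpha|}$ for every $\alpha\in\Delta$ by induction on $|\alpha|$. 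Hence $\Uppsi^p_\Delta(\HS^p_k(A;\Delta))\subset\Ufil^p(R;\Delta)$ and $\Uppsi$ is a filtered HS-structure.
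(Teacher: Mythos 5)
Your overall architecture coincides with the paper's: define $\Uppsi^p_\Delta=\varepsilon^{-1}\pcirc\overline{\nabla}\pcirc\varepsilon$ using the two bijections, prove the $D$-element property by induction on $|\alpha|$ from the recursion $|\alpha|\Uppsi(D)_\alpha=\sum\Uppsi(D)_\gamma\,\nabla(\varepsilon_\beta(D))$ and the Leibniz rule for $\nabla$, deduce compatibility with substitution maps from Theorem \ref{thm:epsilon-bullet} applied to the two $D$-elements $D$ and $\Uppsi^p_\Delta(D)$ together with $A$-linearity of $\nabla$, and reduce multiplicativity, via the cocycle identity $\varepsilon(D\pcirc E)=E^*\varepsilon(D)E+\varepsilon(E)$, to the conjugation-compatibility identity $\overline{\nabla}(E^*\,\delta\,E)=\Uppsi(E)^*\,\overline{\nabla}(\delta)\,\Uppsi(E)$. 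The filtered statement is also handled the same way. All of this is correctly identified and correctly ordered (you rightly note that the $D$-element property must precede the substitution-map axiom).

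The genuine gap is at the crux: the conjugation-compatibility identity itself, which is precisely the paper's Lemma \ref{lemma:nabla-Uppsi} and the only place where real work remains once the reductions are made. You propose to verify it ``using axiom (ii), Lemma \ref{lema:aux-brackets} to rewrite commutators through the $\sbullet$-formalism, and condition iii) of Definition \ref{defi:LR-admi}'', but this is not an argument. Lemma \ref{lema:aux-brackets} only converts the group commutator of two order-one elements $1+rs$, $1+r's'$ into $1+[r,r']ss'$; to reach the full conjugate $E^*\,\delta\,E$ of a power series of derivations by this route you would need to resolve $E$ into iterated group commutators (essentially an exponential/BCH decomposition), and nothing in your sketch does that. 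The paper's actual mechanism is different and genuinely uses $\QQ\subset k$ at this point: both sides of $\Uppsi(E)\,\overline{\nabla}(E^*\delta E)=\overline{\nabla}(\delta)\,\Uppsi(E)$ have the same constant term $\nabla(\delta)$ and both satisfy the first-order equation $\bchi(Y)=Y\,\overline{\nabla}(\varepsilon(E))$ — for the left-hand side one differentiates $E^*\delta E$ using $\bchi(E^*)=-\varepsilon(E)E^*$ from \ref{nume:explicit-varepsilon}~(ii), obtaining the bracket $[E^*\delta E,\varepsilon(E)]$, which is where condition iii) of LR-admissibility enters through $\overline{\nabla}$; uniqueness of solutions with prescribed constant term (division by $|\alpha|$) then concludes. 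Without this, or some equally complete substitute, the multiplicativity of $\Uppsi^p_\Delta$ — and hence the theorem — is not established. A minor additional remark: your phrase ``equivalently'' connecting the fact that $\overline{\overline{\nabla}}$ maps $\text{\rm D}^p_k(A;\Delta)$ into $\HH^p(R;\Delta)$ with the conjugation identity is misleading; the former only guarantees that $\varepsilon^{-1}\pcirc\overline{\nabla}\pcirc\varepsilon$ is well defined, and does not imply multiplicativity.
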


\begin{proof}
We define $\Uppsi^p_\Delta:\HS^p_k(A;\Delta) \longrightarrow \U^p(R;\Delta)$ by forcing the diagram in the statement to be commutative. 
Remember that the vertical arrows $\varepsilon$ are bijective from Proposition \ref{prop:varepsilon-bijective-Q} and (\ref{eq:varepsilon-HS-Q-bijective}). To simplify, let us write $\Uppsi = \Uppsi^p_\Delta$.
For each $E\in \HS^p_k(A;\Delta)$ we have $\varepsilon(\Uppsi (E)) = \overline{\nabla}(\varepsilon (E))$, i.e.:
$$ \bchi(\Uppsi (E)) = \Uppsi (E) \, \overline{\nabla}(\varepsilon (E)) \Longleftrightarrow 
|\alpha| \Uppsi(E)_\alpha = \sum_{\substack{\scriptscriptstyle \beta+\gamma= \alpha\\ \scriptscriptstyle  |\beta|>0}} \Uppsi(E)_\gamma \, \nabla (\varepsilon_\beta(E))\ \forall \alpha\in\Delta.
$$
Actually, we have a bigger commutative diagram:
$$
\begin{tikzcd}
\HS^p_k(A;\Delta)  \ar[r,"\Uppsi"] \ar[d,"\bepsilon"',"\simeq"] &
\U^p(R;\Delta) \ar[d,"\bepsilon","\simeq"']\\
\text{\rm D}^p_k(A;\Delta) \ar[r,"\overline{\overline{\nabla}}"] \ar[d,"\Sigma"',"\simeq"] &  \HH^p(R;\Delta) \ar[d,"\Sigma", "\simeq"']\\
\Der_k(A)[[\bfs]]_{\Delta,+} \ar[r,"\overline{\nabla}"] & R[[\bfs]]_{\Delta,+},
\end{tikzcd}
$$
(see (\ref{eq:varepsilon-HS-Q-bijective})) with $\overline{\overline{\nabla}} \left( \{\delta^i\}_{i=1}^p \right) = 
\{\overline{\nabla}(\delta^i)\}_{i=1}^p$ 
and $\varepsilon = \Sigma \pcirc \bepsilon$. In particular, for each $E\in \HS^p_k(A;\Delta)$ and each
$i=1,\dots,p$ we have
$ \bchi^i(\Uppsi (E)) = \Uppsi (E) \, \overline{\nabla}(\varepsilon^i (E))$ and $ \bchi(\Uppsi (E)) = \Uppsi (E) \, \overline{\nabla}(\varepsilon (E))$, or equivalently:
$$
\alpha_i \Uppsi(E)_\alpha = \sum_{\substack{\scriptscriptstyle \beta+\gamma= \alpha\\ \scriptscriptstyle  |\beta_i|>0}} \Uppsi(E)_\gamma \, \nabla (\varepsilon^i_\beta(E))\ \text{\ and\ }\  
|\alpha| \Uppsi(E)_\alpha = \sum_{\substack{\scriptscriptstyle \beta+\gamma= \alpha\\ \scriptscriptstyle  |\beta|>0}} \Uppsi(E)_\gamma \, \nabla (\varepsilon_\beta(E)),
$$
for all $\alpha\in\Delta$.
\medskip

First, we will prove that the $\Uppsi$ are group homomorphisms. Let us take $D,E\in \HS^p_k(A;\Delta)$. In order to prove $\Uppsi (D\pcirc E) = \Uppsi (D)\, \Uppsi (E)$ it is enough to prove that:
\begin{equation} \label{eq:aux-nabla-Uppsi-1}
 \overline{\nabla} (\varepsilon (D\pcirc E)) =
\varepsilon \left( \Uppsi (D\pcirc E) \right) = \varepsilon \left(
\Uppsi (D)\, \Uppsi (E) \right),
\end{equation}
but we know  that (see \ref{nume:explicit-varepsilon}, (i)):
\begin{eqnarray*}
& \varepsilon (D\pcirc E) = E^*\, \varepsilon(D)\, E + \varepsilon(E),
&
\\
&
\varepsilon \left( \Uppsi (D)\, \Uppsi (E) \right) = 
\Uppsi (E)^* \, \varepsilon \left( \Uppsi (D)\right)\, \Uppsi (E) + 
\varepsilon \left( \Uppsi (E)\right) = 
\Uppsi (E)^* \,  \overline{\nabla} (\varepsilon (D))\, \Uppsi (E) + 
 \overline{\nabla} (\varepsilon (E)),
\end{eqnarray*}
and so identity (\ref{eq:aux-nabla-Uppsi-1}) is equivalent to:
\begin{equation} \label{eq:aux-nabla-Uppsi-2}
 \overline{\nabla} (E^*\, \varepsilon (D) \, E)) =
 \Uppsi(E)^* \, \overline{\nabla} (\varepsilon( D)) \,  \Uppsi(E),
\end{equation}
which is a consequence of Lemma \ref{lemma:nabla-Uppsi}\footnote{Let us notice that the fact that the $\Uppsi$ are group homomorphism only depends on $\nabla$ being a map of $\ZZ$-Lie algebras.}.
\medskip

\noindent 
Second, let us prove that $\Uppsi(D)$ is a $D$-element for each $D\in \HS_k^p(A;\Delta)$, i.e.:
$$ \Uppsi(D)_\alpha\, a =  \sum_{\scriptscriptstyle \beta+\gamma=\alpha}  D_\beta(a)\, \Uppsi(D)_\gamma\quad \forall \alpha\in\Delta,\ \forall a\in A.
$$
For $\alpha=0$ the equality being clear, we proceed by induction on $|\alpha|$:
\begin{eqnarray*}
& \displaystyle
|\alpha| \Uppsi(D)_\alpha\, a = \sum_{\substack{\scriptscriptstyle \beta+\gamma= \alpha\\ \scriptscriptstyle  |\beta|>0}} \Uppsi(D)_\gamma \, \nabla (\varepsilon_\beta(D)) \, a \stackrel{(\star)}{=}
&
\\
& \displaystyle
\sum_{\substack{\scriptscriptstyle \beta+\gamma= \alpha\\ \scriptscriptstyle  |\beta|>0}} \Uppsi(D)_\gamma \, a \, \nabla (\varepsilon_\beta(D)) +  
\sum_{\substack{\scriptscriptstyle \beta+\gamma= \alpha\\ \scriptscriptstyle  |\beta|>0}} \Uppsi(D)_\gamma \, \varepsilon_\beta(D)(a) \stackrel{\text{(IH)}}{=}
&
\\
& \displaystyle
\sum_{\substack{\scriptscriptstyle \beta+\gamma'+\gamma''= \alpha\\ \scriptscriptstyle  |\beta|>0}} D_{\gamma'}(a)\, \Uppsi(D)_{\gamma''} \, \nabla (\varepsilon_\beta(D)) +  
\sum_{\substack{\scriptscriptstyle \beta+\gamma'+\gamma''= \alpha\\ \scriptscriptstyle  |\beta|>0}}
D_{\gamma'}(\varepsilon_\beta(D)(a))\, 
 \Uppsi(D)_{\gamma''}=
 &
\\
& \displaystyle
\sum_{\substack{\scriptscriptstyle \gamma'+\mu= \alpha\\ \scriptscriptstyle  |\mu|>0}}
D_{\gamma'}(a)\, 
\left(
\sum_{\substack{\scriptscriptstyle \beta+\gamma''= \mu\\ \scriptscriptstyle  |\beta|>0}}
\Uppsi(D)_{\gamma''} \, \nabla (\varepsilon_\beta(D))
\right) +
&
\\
& \displaystyle
\sum_{\substack{\scriptscriptstyle \nu+\gamma''= \alpha\\ \scriptscriptstyle  |\nu|>0}}
\left(
\sum_{\substack{\scriptscriptstyle \beta+\gamma'= \nu\\ \scriptscriptstyle  |\beta|>0}}
(D_{\gamma'} \pcirc \varepsilon_\beta(D))(a)
\right)
\, 
 \Uppsi(D)_{\gamma''}=
\sum_{\substack{\scriptscriptstyle \gamma'+\mu= \alpha\\ \scriptscriptstyle  |\mu|>0}}
|\mu| \, D_{\gamma'}(a)\, \Uppsi(D)_\mu
 +
  &
\\
& \displaystyle
 \sum_{\substack{\scriptscriptstyle \nu+\gamma''= \alpha\\ \scriptscriptstyle  |\nu|>0}}
|\nu|\, D_\nu(a) 
\, 
 \Uppsi(D)_{\gamma''}=
 |\alpha| \sum_{\scriptscriptstyle \beta+\gamma=\alpha}  D_\beta(a)\, \Uppsi^p_\Delta(D)_\gamma.
\end{eqnarray*}
Notice that that equality $(\star)$ uses that $\nabla$ satisfies Leibniz rule.
\medskip

\noindent 
To finish, it remains to prove that for any substitution map $\varphi: A[[\bfs]]_\Delta \to A[[\bft]]_\Omega$, with $\bft=\{t_1,\dots,t_q\}$ and $\Omega \subset \N^q$ a non-empty co-ideal, and any $D\in \HS^p_k(A;\Delta)$ we have:
$$ \Uppsi^q_\Omega(\varphi \sbullet D) = \varphi \sbullet \Uppsi^p_\Delta(D).
$$
This is equivalent to $\bepsilon(\Uppsi^q_\Omega(\varphi \sbullet D)) = \bepsilon(\varphi \sbullet \Uppsi^p_\Delta(D))$, 
but we know from Theorem \ref{thm:epsilon-bullet} that:
\begin{eqnarray*} 
& \displaystyle
\varepsilon^j_e(\varphi \sbullet \Uppsi^p_\Delta(D)) = \sum_{\substack{\scriptscriptstyle   0<|h|\leq |e|\\ \scriptscriptstyle i\in \supp h}}
{\bf N}^{j,i}_{e,h}
\varepsilon^i_h(\Uppsi^p_\Delta(D)) = 
\sum_{\substack{\scriptscriptstyle   0<|h|\leq |e|\\ \scriptscriptstyle i\in \supp h}}
{\bf N}^{j,i}_{e,h}
\nabla(\varepsilon^i_h(D)) \stackrel{(\star\star)}{=}
&
\\
& \displaystyle
\nabla \left( 
\sum_{\substack{\scriptscriptstyle   0<|h|\leq |e|\\ \scriptscriptstyle i\in \supp h}}
{\bf N}^{j,i}_{e,h}
\varepsilon^i_h(D) \right) =
\nabla ( \varepsilon^j_e(\varphi \sbullet D) ) =  \varepsilon^j_e( \Uppsi^q_\Omega (\varphi \sbullet D) )
\end{eqnarray*}
for all $j=1,\dots,q$ and for all $e\in \Omega$, and we are done.
Let us notice that equality $(\star\star)$ uses that $\nabla$ is $A$-linear.
\medskip

For the last part, if $R = \cup_{d\geq 0} R_d$ is filtered and $\Ima \nabla \subset R_1$, then the image of each map
$$ \overline{\nabla}: \Der_k(A)[[\bfs]]_{\Delta,+} \longrightarrow R[[\bfs]]_{\Delta,+}
$$
is contained in $R_1[[\bfs]]_{\Delta,+}$, and it is easy to see that 
$\varepsilon^{-1} \left( R_1[[\bfs]]_{\Delta,+} \right) \subset \Ufil^p(R;\Delta)$.
\end{proof}

\begin{lemma} \label{lemma:nabla-Uppsi}
Under the hypotheses of Theorem \ref{thm:from-nabla-to-Uppsi}, for each $\delta\in \Der_k(A)[[\bfs]]_\Delta$ and each $E\in \HS^p_k(A;\Delta)$ the following identity holds:
$$ \Uppsi(E) \,  \overline{\nabla} (E^*\, \delta \, E) =  \overline{\nabla} (\delta) \, \Uppsi(E).
$$
\end{lemma}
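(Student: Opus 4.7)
\smallskip
\noindent\textbf{Proof proposal.} The plan is to first reduce to the case where $\delta$ is a constant series $\delta\in\Der_k(A)\subset \Der_k(A)[[\bfs]]_\Delta$. Writing $\delta=\sum_\nu \delta_\nu\bfs^\nu$ with $\delta_\nu\in\Der_k(A)$, and using that $\bfs$ is central in $R[[\bfs]]_\Delta$, we have $E^*\delta E=\sum_\nu (E^*\delta_\nu E)\bfs^\nu$ (note that each $E^*\delta_\nu E$ lies in $\Der_k(A)[[\bfs]]_\Delta$ because conjugation by the automorphism $\widetilde{E}$ of $A[[\bfs]]_\Delta$ preserves the $k[[\bfs]]_\Delta$-derivations). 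Applying $\overline{\nabla}$ (which is $k[[\bfs]]_\Delta$-linear) and multiplying by $\Uppsi(E)$ on the left, and doing the analogous expansion for $\overline{\nabla}(\delta)\Uppsi(E)$, one sees that the constant case implies the general case.

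\smallskip
Fix now $\delta\in\Der_k(A)$ and set
\[
 X := \Uppsi(E)\,\overline{\nabla}(E^*\delta E) - \overline{\nabla}(\delta)\,\Uppsi(E) \ \in\ R[[\bfs]]_\Delta.
\]
The key step is to compute $\bchi_R(X)$ using the Leibniz rule. The ingredients are: (i)~$\bchi_R(\Uppsi(E))=\Uppsi(E)\,\overline{\nabla}(\varepsilon(E))$, which follows from the defining identity $\varepsilon(\Uppsi(E))=\overline{\nabla}(\varepsilon(E))$ together with $\varepsilon(r)=r^*\bchi_R(r)$; (ii)~$\bchi_R(E^*\delta E)=[E^*\delta E,\varepsilon(E)]$, obtained from $\bchi_R(\delta)=0$, $\bchi_R(E)=E\,\varepsilon(E)$, and $\bchi_R(E^*)=-\varepsilon(E)E^*$ (the last being derived from $\bchi_R(EE^*)=0$); (iii)~the Lie-algebra identity $\overline{\nabla}([u,v])=[\overline{\nabla}(u),\overline{\nabla}(v)]$ for $u,v\in\Der_k(A)[[\bfs]]_\Delta$, coming from clause~iii) of Definition~\ref{defi:LR-admi}; and (iv)~the obvious commutation $\bchi_R\pcirc\overline{\nabla}=\overline{\nabla}\pcirc\bchi_R$. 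Expanding the four Leibniz terms, the commutator produced by (ii)+(iii) cancels against the first term coming from (i), leaving
\[
 \bchi_R(X) = X\cdot \overline{\nabla}(\varepsilon(E)).
\]

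\smallskip
To finish, note that $\varepsilon(E)\in\Der_k(A)[[\bfs]]_{\Delta,+}$ has zero constant term (see \ref{nume:explicit-varepsilon}(iv)), hence $\overline{\nabla}(\varepsilon(E))\in R[[\bfs]]_{\Delta,+}$. Writing $X=\sum_\alpha X_\alpha\bfs^\alpha$ and extracting the coefficient of $\bfs^\alpha$ in the displayed identity gives
\[
 |\alpha|\,X_\alpha \ =\ \sum_{\substack{\scriptscriptstyle \beta+\gamma=\alpha\\ \scriptscriptstyle |\beta|>0}} X_\gamma\,\nabla(\varepsilon_\beta(E)),
\]
where each $X_\gamma$ on the right has strictly smaller length than $\alpha$. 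The base case $X_0=\nabla(\delta)-\nabla(\delta)=0$ is immediate, and since $\QQ\subset k$ makes $|\alpha|$ invertible for $|\alpha|\ge 1$, an induction on $|\alpha|$ yields $X_\alpha=0$ for every $\alpha\in\Delta$, so $X=0$. The main obstacle to spot is the Euler-derivation trick: the identity we want is ``degree-preserving'' in $\bfs$, but applying $\bchi_R$ promotes it to a statement whose right-hand side multiplies $X$ by an element without constant term, which is exactly what permits the inductive descent on $|\alpha|$ in characteristic zero.
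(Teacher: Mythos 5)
Your proposal is correct and follows essentially the same route as the paper: reduce to constant $\delta\in\Der_k(A)$, apply the Euler derivation $\bchi_R$ to both sides (equivalently, to their difference $X$), use $\bchi_R(\Uppsi(E))=\Uppsi(E)\,\overline{\nabla}(\varepsilon(E))$, $\bchi_R(E^*)=-\varepsilon(E)E^*$ and the compatibility of $\overline{\nabla}$ with brackets to obtain the first-order equation $\bchi_R(X)=X\,\overline{\nabla}(\varepsilon(E))$, and conclude by induction on $|\alpha|$ using $\QQ\subset k$ and $X_0=0$. The only cosmetic difference is that the paper phrases the conclusion as uniqueness of solutions of the differential equation $\bchi(Y)=Y\,\overline{\nabla}(\varepsilon(E))$ with prescribed constant term, whereas you work directly with the difference $X$; it also first reduces to finite $\Delta$ before invoking linearity in $\delta$, a point your coefficient-wise induction handles implicitly.
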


\begin{proof} 
Since all the involved maps and operations are compatible with truncations and any series in $R[[\bfs]]_\Delta$ is determined by its finite truncations, we may assume that $\Delta$ is finite, and since both terms are $k[[\bfs]]_\Delta$-linear in $\delta$, 
we may assume $\delta\in\Der_k(A)$.
By definition of $\Uppsi$, we have:
$$ \bchi(\Uppsi (E)) = \Uppsi (E) \, \overline{\nabla}(\varepsilon (E)),\quad \text{with}\ \bchi = \sum_{i=1}^p s_i \frac{\partial}{\partial s_i}.
$$
Since the $0$-term of the series $\Uppsi(E) \,  \overline{\nabla} (E^*\, \delta \, E)$ and  $\overline{\nabla} (\delta) \, \Uppsi(E)$ coincide (they are equal to $\nabla(\delta)$) and $\QQ \subset k$, it is enough to prove that both series are solution of the differential equation:
$$ \bchi(Y) = Y \, \overline{\nabla}(\varepsilon (E)).
$$
Namely:
\begin{eqnarray*}
&\displaystyle
\bchi\left(\Uppsi(E) \,  \overline{\nabla} (E^*\, \delta \, E) \right) = \bchi(\Uppsi(E)) \,  \overline{\nabla} (E^*\, \delta \, E) +
\Uppsi(E) \,  \bchi(\overline{\nabla} (E^*\, \delta \, E) ) =
&
\\
&\displaystyle
\Uppsi(E)) \, \overline{\nabla}(\varepsilon (E)) \,  \overline{\nabla} (E^*\, \delta \, E)  + \Uppsi(E) \, \overline{\nabla} (\bchi(E^*\, \delta \, E) ) = 
&
\\
&\displaystyle
\Uppsi(E)) \, \overline{\nabla}(\varepsilon (E)) \,  \overline{\nabla} (E^*\, \delta \, E)  +  
\Uppsi(E) \, \overline{\nabla} (\bchi(E^*) \, \delta \, E  + E^*\, \delta \, \bchi(E)) \stackrel{(\star)}{=}
&
\\
&\displaystyle
\Uppsi(E)) \, \overline{\nabla}(\varepsilon (E)) \,  \overline{\nabla} (E^*\, \delta \, E)  +   
\Uppsi(E) \, \overline{\nabla}  ( - \varepsilon(E) \, E^* \, \delta \, E + E^*\, \delta \, E \, \varepsilon(E))  =
&
\\
&\displaystyle
\Uppsi(E)) \, \overline{\nabla}(\varepsilon (E)) \,  \overline{\nabla} (E^*\, \delta \, E)  +   
\Uppsi(E) \, \overline{\nabla}  ( [E^*\, \delta \, E,\varepsilon(E)])  =
&
\\
&\displaystyle
\Uppsi(E)) \, \overline{\nabla} (\varepsilon (E)) \,  \overline{\nabla} (E^*\, \delta \, E)  +   
\Uppsi(E) \, [\overline{\nabla}(E^*\, \delta \, E),\overline{\nabla}(\varepsilon(E))] = 
&
\\
&\displaystyle
\Uppsi(E)) \,  \overline{\nabla} (E^*\, \delta \, E) \, \overline{\nabla}(\varepsilon (E)),
\end{eqnarray*}
where equality $(\star)$ comes from \ref{nume:explicit-varepsilon}, (ii),
and 
$$
\bchi(\overline{\nabla}(\delta)\, \Uppsi(E)) = \overline{\nabla}(\delta)\, \bchi(\Uppsi(E)) = \overline{\nabla}(\delta)\, \Uppsi(E)\, \overline{\nabla}(\varepsilon (E)).$$
\end{proof}

\begin{thm} \label{thm:bkappa-iso}
If $\QQ \subset k$, then the map (\ref{eq:bkappa})
$$\bkappa: \dU_{A/k}^{\text{\rm\tiny LR}} \to \dU_{A/k}^{\text{\rm\tiny HS}}
$$
is an isomorphism of filtered $k$-algebras over $A$.
\end{thm}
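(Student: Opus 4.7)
The plan is to construct an inverse $\blambda:\dU_{A/k}^{\text{\rm\tiny HS}} \to \dU_{A/k}^{\text{\rm\tiny LR}}$ to $\bkappa$ by running the two universal properties in opposite directions, with Theorem \ref{thm:from-nabla-to-Uppsi} serving as the bridge. Apply that theorem to the universal LR-admissible map $\sigma:\Der_k(A) \to \dU_{A/k}^{\text{\rm\tiny LR}}$: since $\Ima \sigma \subset (\dU_{A/k}^{\text{\rm\tiny LR}})_1$, we obtain a filtered HS-structure $\Uppsi$ on $\dU_{A/k}^{\text{\rm\tiny LR}}$ over $A/k$ characterized by $\varepsilon\pcirc \Uppsi^p_\Delta = \overline{\sigma}\pcirc\varepsilon$ on each $\HS^p_k(A;\Delta)$. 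The universal property of $(\dU_{A/k}^{\text{\rm\tiny HS}},\Upupsilon)$ (Proposition \ref{prop:HS-struc-dU}), together with the fact that $\Uppsi$ is filtered, yields a unique map $\blambda:\dU_{A/k}^{\text{\rm\tiny HS}} \to \dU_{A/k}^{\text{\rm\tiny LR}}$ of filtered $k$-algebras over $A$ satisfying $\blambda\pcirc \Upupsilon = \Uppsi$.

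To check $\blambda\pcirc\bkappa = \Id_{\dU_{A/k}^{\text{\rm\tiny LR}}}$, the universal property of $\dU_{A/k}^{\text{\rm\tiny LR}}$ reduces the task to verifying $\blambda\pcirc\bkappa\pcirc\sigma = \sigma$, i.e.\ $\blambda\pcirc \nabla^{\text{\rm\tiny HS}} = \sigma$. Writing $D = \Id + \delta s \in \HS^1_k(A;\{0,1\})$, a direct computation (using $D^* = 1 - \delta s$) gives $\varepsilon(D) = \delta s$; plugging $D$ into the compatibility $\varepsilon\pcirc \Uppsi^1_1 = \overline{\sigma}\pcirc\varepsilon$ forces $\varepsilon(\Uppsi^1_1(D)) = \sigma(\delta)\,s$ and hence $\Uppsi^1_1(D) = 1 + \sigma(\delta)\,s$. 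Since $\Upupsilon^1_1(D) = 1 + \nabla^{\text{\rm\tiny HS}}(\delta)\,s$ by definition and $\blambda\pcirc\Upupsilon = \Uppsi$, we read off $\blambda(\nabla^{\text{\rm\tiny HS}}(\delta)) = \sigma(\delta)$, as required.

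For the other composition, the universal property of $\dU_{A/k}^{\text{\rm\tiny HS}}$ reduces $\bkappa\pcirc\blambda = \Id_{\dU_{A/k}^{\text{\rm\tiny HS}}}$ to the identity $\bkappa\pcirc \Uppsi = \Upupsilon$ of HS-structures on $\dU_{A/k}^{\text{\rm\tiny HS}}$. By Corollary \ref{cor:compatib-nabla-epsilon}, each of these HS-structures satisfies the $\varepsilon$-compatibility with its underlying LR-admissible map. The LR-admissible map of $\Upupsilon$ is $\nabla^{\text{\rm\tiny HS}}$ by definition, while the one of $\bkappa\pcirc\Uppsi$ is $\bkappa\pcirc\sigma = \nabla^{\text{\rm\tiny HS}}$, since the previous paragraph showed the LR-admissible map of $\Uppsi$ is $\sigma$. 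The uniqueness clause of Theorem \ref{thm:from-nabla-to-Uppsi}, applied to $R = \dU_{A/k}^{\text{\rm\tiny HS}}$ with $\nabla = \nabla^{\text{\rm\tiny HS}}$, then forces $\bkappa\pcirc\Uppsi = \Upupsilon$. The main load-bearing step is this last one: the entire argument hinges on Theorem \ref{thm:from-nabla-to-Uppsi} establishing a \emph{bijection} between LR-admissible maps and HS-structures when $\QQ\subset k$, so that two HS-structures sharing the same underlying LR-admissible map must coincide; this is precisely where Proposition \ref{prop:varepsilon-bijective-Q} (and hence the characteristic zero hypothesis) enters decisively.
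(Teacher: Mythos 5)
Your construction of $\blambda$ via Theorem \ref{thm:from-nabla-to-Uppsi} applied to $\sigma$, and your verification that $\blambda\pcirc\bkappa=\Id$ by checking $\blambda\pcirc\nabla^{\text{\rm\tiny HS}}=\sigma$ on the level of $\HS_k(A;1)$, match the paper's proof step for step. Where you genuinely diverge is in the second composition. The paper never proves $\bkappa\pcirc\blambda=\Id$ directly: it instead establishes that $\bkappa$ is \emph{surjective} by passing to associated graded rings, using the surjectivity of $\bupvartheta:\Gamma_A\Ider^f_k(A)\to\gr\dU_{A/k}^{\text{\rm\tiny HS}}$ (item (v) of \ref{nume:properties-U-HS}, available since $\QQ\subset k$ forces $\Ider^f_k(A)=\Der_k(A)$) together with the characteristic-zero isomorphism $\Sim_A\Der_k(A)\xrightarrow{\sim}\Gamma_A\Der_k(A)$ to conclude that $\gr\bkappa$, hence $\bkappa$, is surjective; combined with left-invertibility this yields the isomorphism. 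Your alternative --- reducing $\bkappa\pcirc\blambda=\Id$ to the identity of HS-structures $\bkappa\pcirc\Uppsi=\Upupsilon$ via the universal property of $\dU_{A/k}^{\text{\rm\tiny HS}}$, observing that both have underlying LR-admissible map $\nabla^{\text{\rm\tiny HS}}$ (because $\bkappa\pcirc\sigma=\nabla^{\text{\rm\tiny HS}}$ by the very definition of $\bkappa$), and then invoking Corollary \ref{cor:compatib-nabla-epsilon} together with the uniqueness clause of Theorem \ref{thm:from-nabla-to-Uppsi} --- is correct and arguably more economical: it bypasses the divided-power and graded machinery entirely and rests only on the bijectivity of $\varepsilon$ in characteristic zero, which makes an HS-structure determined by its underlying LR-admissible map. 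What the paper's route buys in exchange is the surjectivity of $\gr\bkappa$ and the explicit link between $\Sim_A\Der_k(A)$, $\Gamma_A\Der_k(A)$ and $\gr\dU_{A/k}^{\text{\rm\tiny HS}}$, which is of independent interest; your route yields the two-sided inverse more directly.
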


\begin{proof} By applying Theorem \ref{thm:from-nabla-to-Uppsi} to the universal LR-admissible map
$$ \sigma:\Der_k(A) \longrightarrow \dU_{A/k}^{\text{\rm\tiny LR}},$$
there is a unique filtered HS-structure $\Uppsi^{\text{\rm\tiny LR}}$ on $\dU_{A/k}^{\text{\rm\tiny LR}}$ over $A/k$ such that $\overline{\sigma} \pcirc \varepsilon=\varepsilon \pcirc \left(\Uppsi^{\text{\rm\tiny LR}}\right)^p_\Delta$ for each $p\geq 1$ and each non-empty co-ideal $\Delta \subset \N^p$, 
 and so, by Proposition \ref{prop:HS-struc-dU}, there is a unique map $ \blambda: \dU_{A/k}^{\text{\rm\tiny HS}} \longrightarrow \dU_{A/k}^{\text{\rm\tiny LR}}$ of filtered $k$-algebras over $A$ such that $\Uppsi^{\text{\rm\tiny LR}} = \blambda \pcirc \Upupsilon$.
\medskip

Let us prove that $\blambda$ is the inverse map of $\bkappa$. 
For each $\delta \in \Der_k(A)$ we have:
\begin{eqnarray*}
&\sigma(\delta) s = \overline{\sigma}(\delta s) = \overline{\sigma}(\varepsilon(\Id + \delta s)) = \varepsilon \left(\left(\Uppsi^{\text{\rm\tiny LR}}\right)^1_1(\Id + \delta s)\right) = 
\varepsilon(\overline{\blambda}(\Upupsilon^1_1(\Id + \delta s))) = 
&
\\
& 
\varepsilon(\overline{\blambda}(1 + \nabla^{\text{\rm\tiny HS}}(\delta) s)) =
\varepsilon(1+ \blambda(\nabla^{\text{\rm\tiny HS}}(\delta))s) = (\blambda \pcirc \nabla^{\text{\rm\tiny HS}})(\delta)s.
\end{eqnarray*}
So, $\sigma = \blambda \pcirc \nabla^{\text{\rm\tiny HS}} = \blambda \pcirc \bkappa \pcirc \sigma$ and we deduce 
that $\blambda \pcirc \bkappa = \Id$. 
\medskip

Since $\QQ\subset k$, we have $\Ider^f_k(A)=\Der_k(A)$  and so the map 
$\bupvartheta:\Gamma_A \Ider^f_k(A) \to \gr \dU_{A/k}^{\text{\rm\tiny HS}}$ is surjective (see \ref{nume:properties-U-HS}, (v)). We easily check that the following diagram is commutative:
$$
\begin{tikzcd}
\gr \dU_{A/k}^{\text{\rm\tiny LR}} \ar[r,"\gr \bkappa"] & \gr \dU_{A/k}^{\text{\rm\tiny HS}} \\
\Sim_A \Der_k(A) \ar[u,"\text{nat.}"] \ar[r,"\text{nat.}"] & \Gamma_A \Der_k(A), \ar[u,"\bupvartheta"']
\end{tikzcd}
$$
and since $\QQ\subset k$, we have $\Sim_A \Der_k(A) \xrightarrow{\sim} \Gamma_A \Der_k(A)$
and we deduce that $\gr \bkappa$ is surjective, and so $\bkappa$ is surjective too. 
We conclude that $\blambda$ is the the inverse map of $\bkappa$.
\end{proof}

\begin{cor} \label{cor:main}
Under the above hypotheses, the category of left (resp. right) HS-modules over $A/k$ coincide with the category of $A$-modules endowed with a left (resp. right) integrable connection over $A/k$.
\end{cor}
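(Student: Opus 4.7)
The plan is that Corollary \ref{cor:main} follows formally from Theorem \ref{thm:bkappa-iso} combined with the universal properties of both enveloping algebras. My first step would be to recognise that a left HS-module structure on an $A$-module $E$ is, via Proposition \ref{prop:HS-struc-dU} applied to $R=\End_k(E)$, the same datum as a map of $k$-algebras over $A$ from $\dU_{A/k}^{\text{\rm\tiny HS}}$ to $\End_k(E)$, equivalently a left $\dU_{A/k}^{\text{\rm\tiny HS}}$-module structure on $E$ compatible with the given $A$-action. Symmetrically, a left integrable connection on $E$ corresponds to a compatible left $\dU_{A/k}^{\text{\rm\tiny LR}}$-module structure on $E$, by the Rinehart universal property. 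Recasting both categories in these terms, the isomorphism $\bkappa$ of Theorem \ref{thm:bkappa-iso} produces an isomorphism of module categories by restriction of scalars that fixes the underlying $A$-module structure.

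Next I would verify morphisms. An $A$-linear $f:E\to F$ is a HS-map iff it is $\dU_{A/k}^{\text{\rm\tiny HS}}$-linear, and it is a morphism of integrable connections iff it is $\dU_{A/k}^{\text{\rm\tiny LR}}$-linear; under $\bkappa$ these two conditions coincide. In particular, testing HS-linearity against $D=\Id+\delta s\in\HS_k(A;1)$ instantly recovers the commutation with $\nabla(\delta)$, giving one direction directly; the reverse direction is what genuinely uses $\bkappa$ (or, alternatively, the uniqueness clause in Theorem \ref{thm:from-nabla-to-Uppsi}), since a priori the conditions $\overline{f}\pcirc\Phi^p_\Delta(D)=\Uppsi^p_\Delta(D)\pcirc\overline{f}$ for arbitrary $(p,\Delta,D)$ could impose more than compatibility with $\nabla$.

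The step requiring the most care will be checking that the equivalence produced by $\bkappa$ matches, on objects, the two explicit constructions already in hand: the functor $(E,\Uppsi)\mapsto(E,\nabla)$ of Corollary \ref{cor:HS-mod->IC} and the inverse functor of Theorem \ref{thm:from-nabla-to-Uppsi}. Since $\bkappa\pcirc\sigma=\nabla^{\text{\rm\tiny HS}}$ by construction and the classifying map $\dU_{A/k}^{\text{\rm\tiny HS}}\to\End_k(E)$ attached to a HS-module is natural in $E$, the connection extracted via Corollary \ref{cor:HS-mod->IC} agrees with the one got by restricting the $\dU_{A/k}^{\text{\rm\tiny HS}}$-action along $\bkappa$. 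Conversely, given $(E,\nabla)$, the HS-structure produced by Theorem \ref{thm:from-nabla-to-Uppsi} satisfies the characterising differential equation \eqref{eq:intro-2}, and the HS-structure pulled back through $\bkappa^{-1}$ also satisfies it by Corollary \ref{cor:compatib-nabla-epsilon}; the uniqueness in Theorem \ref{thm:from-nabla-to-Uppsi} then forces them to coincide. The right-module case is handled identically after replacing $\End_k(E)$ by $\End_k(E)^{\text{\rm opp}}$ throughout.
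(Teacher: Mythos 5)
Your argument is correct, but it is worth noting that the paper gives no explicit proof of Corollary \ref{cor:main}: the intended derivation, as signalled in the introduction, is the direct one --- Corollary \ref{cor:HS-mod->IC} gives the functor $(E,\Uppsi)\mapsto(E,\nabla)$, Theorem \ref{thm:from-nabla-to-Uppsi} gives a quasi-inverse on objects, and Corollary \ref{cor:compatib-nabla-epsilon} together with the uniqueness clause of Theorem \ref{thm:from-nabla-to-Uppsi} shows the two constructions are mutually inverse, exactly as in the third paragraph of your proposal. What you do differently is to route the object-level equivalence through Theorem \ref{thm:bkappa-iso}, identifying HS-module structures with $k$-algebra-over-$A$ maps $\dU_{A/k}^{\text{\rm\tiny HS}}\to\End_k(E)$ and integrable connections with such maps out of $\dU_{A/k}^{\text{\rm\tiny LR}}$, and then restricting scalars along $\bkappa$. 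This buys you a clean treatment of morphisms --- which the direct route leaves implicit --- since the non-obvious implication (commuting with $\nabla$ forces the full HS-map condition) becomes transparent once both conditions are read as linearity over isomorphic algebras. The one point you lean on without flagging it is that a HS-map in the sense of the paper is the same as a $\dU_{A/k}^{\text{\rm\tiny HS}}$-linear map: the ``only if'' direction requires that $\dU_{A/k}^{\text{\rm\tiny HS}}$ be generated as a $k$-algebra by $\iota(A)$ and the components $\Upupsilon^p_\Delta(D)_\alpha$ of the universal HS-structure. This holds by the construction in the reference \cite{nar_envelop} but is not recorded among the properties listed in \ref{nume:properties-U-HS}, so it should be cited explicitly; alternatively, the recursion $|\alpha|\Uppsi(D)_\alpha=\sum\Uppsi(D)_\gamma\,\nabla(\varepsilon_\beta(D))$ from the proof of Theorem \ref{thm:from-nabla-to-Uppsi} shows directly that each $\Uppsi(D)_\alpha$ lies in the subalgebra of $\End_k(E)$ generated by $A$ and $\Ima\nabla$, which yields the morphism statement without invoking $\bkappa$ at all.
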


\bigskip

\noindent {\small \href{http://personal.us.es/narvaez/}{Luis Narv\'aez Macarro}\\
\noindent \href{http://departamento.us.es/da/}{Departamento de \'Algebra} \&\
\href{http://www.imus.us.es}{Instituto de Matem\'aticas (IMUS)}\\
\href{http://matematicas.us.es}{Facultad de Matem\'aticas}, \href{http://www.us.es}{Universidad de Sevilla}\\
Calle Tarfia s/n, 41012  Sevilla, Spain} \\
{\small {\it E-mail}\ : narvaez@us.es}

\end{document}